\newtheorem{theorem}{Theorem}[section]
\newtheorem{thm}[theorem]{Theorem}
\newtheorem{fact}[theorem]{Fact}
\newtheorem{prop}[theorem]{Proposition}
\newtheorem{claim}[theorem]{Claim}
\newtheorem{lemma}[theorem]{Lemma}
\newtheorem{cor}[theorem]{Corollary}
\theoremstyle{definition}
\newtheorem{definition}[theorem]{Definition}
\newtheorem{remark}[theorem]{Remark}
\theoremstyle{remark}
\newtheorem*{aside}{Aside}
\newcommand{\DII}{\Delta^0_2}
\newcommand{\NN}{\omega}
\newcommand{\sub}{\subseteq}
\newcommand{\sN}[1]{_{#1\in \NN}}
\DeclareMathOperator{\uhr}{\upharpoonright}
\newcommand{\SI}[1]{\Sigma^0_{#1}}
\newcommand{\PI}[1]{\Pi^0_{#1}}
\newcommand{\PPI}{\PI{1}}
\newcommand{\bi}{\begin{itemize}}
\newcommand{\ei}{\end{itemize}}
\newcommand{\bc}{\begin{center}}
\newcommand{\ec}{\end{center}}
\newcommand{\Halt}{{\ES'}}
\newcommand{\ES}{\emptyset}
\newcommand{\tp}[1]{2^{#1}}
\newcommand{\ex}{\exists}
\newcommand{\fa}{\forall}
\newcommand{\la}{\langle}
\newcommand{\ra}{\rangle}
\newcommand{\leT}{\le_{\mathrm{T}}}
\newcommand{\n}{\noindent}
\newcommand{\sss}{\sigma}
\newcommand{\aaa}{\alpha}
\renewcommand{\phi}{\varphi}
\renewcommand{\setminus}{\smallsetminus}
\renewcommand{\hat}{\widehat}
\newcommand \seq[1]{{\left\langle{#1}\right\rangle}}
\newcommand\+[1]{\mathcal{#1}}
\newcommand{\wt}{\widetilde}
\newcommand{\ol}{\overline}
\newcommand{\ape}{\hat{\ }}
\newcommand{\lra}{\leftrightarrow}
\newcommand{\RA}{\Rightarrow}
\newcommand{\LA}{\Leftarrow}
\newcommand{\DA}{\downarrow}
\newcommand{\rapf}{\n $\RA:$\ }
\newcommand{\lapf}{\n $\LA:$\ }
\newcommand{\sssl}{\ensuremath{|\sigma|}}
\newcommand{\dom}{\ensuremath{\mathrm{dom}}}
\newcommand{\frb}{\mathfrak{b}}
\newcommand{\frd}{\mathfrak{d}}
\newcommand{\fru}{\mathfrak{u}}
\newcommand{\set}[2]{\{#1 \colon #2\}}
\newcommand{\B}{\mathbb B}
\begin{document}

\title[Maximal towers and ultrafilter bases in computability]
{Maximal towers and ultrafilter bases\\ in computability theory}
\date{\today}

\author[Lempp]{Steffen Lempp}
\author[Miller]{Joseph S.~Miller}
\author[Nies]{\\Andr\'e Nies}
\author[Soskova]{Mariya I.~Soskova}

\address[Lempp, Miller, Soskova]{Department of Mathematics,
University of Wisconsin--Madi\-son,
480 Lincoln Drive, Madison, WI 53706,
USA}
\email{lempp@math.wisc.edu}
\email{jmiller@math.wisc.edu}
\email{msoskova@math.wisc.edu}

\address[Nies]{School of Computer Science,
University of Auckland,
Private bag 92019,
Auckland,
New Zealand}
\email{andre@cs.auckland.ac.nz}

\thanks{Lempp was partially supported by Simons Collaboration Grant for
Mathematicians \#626304. Miller was partially supported by grant \#358043
from the Simons Foundation. Nies was partially supported by the Marsden fund
of New Zealand, grant 19-UOA-346. Soskova was partially supported by NSF
Grant DMS-1762648. Miller and Nies were partially supported by NSF Grant
DMS-2053848. The authors thank J\"org Brendle and Noam Greenberg for helpful
discussions during a workshop at the Casa Matem\'atica Oaxaca in August 2019,
where this research received its initial impetus. The authors would also like
to thank the referees for helpful comments.}

\subjclass[2020]{Primary 03D30}
\keywords{sequences of computable sets, mass problems, Medvedev reducibility,
cardinal characteristics, highness, ultrafilters}

\begin{abstract}
The tower number~$\mathfrak t$ and the ultrafilter number~$\fru$ are
cardinal characteristics from set theory. They are based on combinatorial
properties of classes of subsets of~$\omega$ and the almost inclusion
relation $\sub^*$ between such subsets. We consider analogs of these
cardinal characteristics in computability theory.

We say that a sequence $(G_n)_{n \in \mathbb N}$ of computable sets is a
\emph{tower} if $G_0 = \mathbb N$, $G_{n+1} \subseteq^* G_n$, and
$G_n\setminus G_{n+1}$ is infinite for each~$n$. A tower is \emph{maximal}
if there is no infinite computable set contained in all~$G_n$. A tower
$\seq{G_n}\sN n$ is an \emph{ultrafilter base} if for each computable~$R$,
there is~$n$ such that $G_n \sub^* R$ or $G_n \subseteq^* \ol R$; this
property implies maximality of the tower. A sequence $(G_n)_{n \in \mathbb
N}$ of sets can be encoded as the ``columns'' of a set $G\subseteq \mathbb
N$. Our analogs of~$\mathfrak t$ and $\mathfrak u$ are the mass problems of
sets encoding maximal towers, and of sets encoding towers that are
ultrafilter bases, respectively. The relative position of a cardinal
characteristic broadly corresponds to the relative computational complexity
of the mass problem. We use Medvedev reducibility to formalize relative
computational complexity, and thus to compare such mass problems to known
ones.

We show that the mass problem of ultrafilter bases is equivalent to the
mass problem of computing a function that dominates all computable
functions, and hence, by Martin's characterization, it captures highness.
On the other hand, the mass problem for maximal towers is below the mass
problem of computing a non-low set. We also show that some, but not all,
noncomputable low sets compute maximal towers: Every noncomputable (low)
c.e.\ set computes a maximal tower but no 1-generic $\Delta^0_2$-set does so.

We finally consider the mass problems of maximal almost disjoint, and of
maximal independent families. We show that they are Medvedev equivalent to
maximal towers, and to ultrafilter bases, respectively.
\end{abstract}

\maketitle
\setcounter{tocdepth}{1}
\tableofcontents
\setcounter{tocdepth}{2}

\section{Introduction}

%
%
%
%
%
%

Cardinal characteristics measure how far the set-theoretic universe deviates
from satisfying the continuum hypothesis. They are natural cardinals greater
than~$\aleph_0$ and at most~$2^{\aleph_0}$. For instance, the \emph{bounding
number}~$\frb$ is the least size of a collection of functions $f\colon \NN
\to \NN$ such that no single function dominates the entire
collection.\footnote{This is less commonly, but perhaps more sensibly, called
the \emph{unbounding number}.}
Related is the \emph{dominating number}~$\frd$, the least size of a
collection of functions $f\colon \NN \to \NN$ such that every function is
dominated by some function in the collection.
Here, for functions $f, g \colon \NN \to \NN$, we say that~$g$
\emph{dominates}~$f$ if $g(n) \ge f(n)$ for sufficiently large~$n$. An
important program in set theory is to prove less than or equal-relations
between characteristics in ZFC, and to separate them in suitable forcing
extensions.

Analogs of cardinal characteristics in computability theory were first
studied by Rupprecht~\cite{Rupprecht:thesis,Rupprecht:10} and further
investigated by Brendle, Brooke-Taylor, Ng, and
Nies~\cite{Brendle.Brooke.ea:14}.
An article by Greenberg, Kuyper, and Turetsky~\cite{Greenberg.etal:18}, in
part based on Rupprecht's work, provides a systematic approach to the two
connected settings of set theory and computability, at least for certain
types of cardinal characteristics. The relevant characteristics are given by
binary relations, such as the domination relation~$\le^*$ between functions;
their computability-theoretic analogs are ordered by reducibilities that
measure relative computability. A well-understood example of this is how the
relation~$\le^*$ gives rise to the bounding number~$\frb(\le^*)$ and the
dominating number $\frd(\le^*)$, and their analogs in computability, which
are highness and having hyperimmune degree. A~general reference in set theory
is the survey paper by Blass~\cite{Blass:10}. The brief survey by
Soukup~\cite{Soukup:18} contains a diagram displaying the ZFC inequalities
between the most important characteristics in this setting, along with
$\frb(\le^*)$ and $\frd(\le^*)$.

In this paper, we consider cardinal characteristics that do not fit into the
framework of Rupprecht, and Greenberg, Kuyper and
Turetsky~\cite{Greenberg.etal:18}. In particular, we initiate the study of
the computa\-bility-theoretic analogs of the ultrafilter, tower, and
independence numbers. These characteristics are defined in the setting of
subsets of~$\omega$ up to almost inclusion~$\sub^*$; we give definitions
below.

The \emph{ultrafilter number}~$\fru$ is the least size of a subset {of
$[\omega]^{\omega}$} with upward closure a nonprincipal ultrafilter on
$\omega$. We note that one cannot in general require here that the subset is
linearly ordered by~$\subseteq^*$: Recall that an ultrafilter~$F$ on~$\omega$
is a $P$-point if for each partition $\seq {C_n}$ of~$\omega$ such that $C_n
\not \in F$ for each~$n$, there is $A \in F$ such that $C_n \cap A$ is finite
for each~$n$. An ultrafilter with a linear base is a $P$-point. Shelah (see
Wimmers~\cite{Wimmers:82}) has shown that it is consistent with ZFC that
there are no $P$-points. So it is consistent with ZFC that the version
of~$\fru$ relying on linear bases would be undefined.


The \emph{tower number}~$\mathfrak t$ is the minimum size of a subset of
{$[\omega]^\omega$ that is linearly ordered by~$\subseteq^*$} and cannot be
extended by adding a new element below all given elements. To~define the
\emph{pseudointersection number}~$\mathfrak p$, the requirement in the
definition of towers that the sets in the class be linearly ordered under
$\sub^*$ is weakened to requiring that every finite subset of the class has
an infinite intersection. So, trivially, $\mathfrak p \le \mathfrak t$. In
celebrated work, Malliaris and Shelah~\cite{Malliaris.Shelah:13} showed (in
ZFC) that $\mathfrak p = \mathfrak t$ (see also~\cite{Soukup:18}). It is not
hard to see that ZFC proves $\mathfrak t \le \fru $. It is consistent that
$\mathfrak t < \fru $ (see~\cite{Blass:10} for both statements).

A class~$\+C$ of subsets of~$\NN$ is \emph{independent} if any intersection
of finitely many sets in~$\+C$ or their complements is infinite. The
\emph{independence number}~$\mathfrak i$ is the least cardinal of a maximal
independent family.
There has been much work recently on~$\mathfrak i$ in
set theory, in particular, the descriptive complexity of maximal independent
families, such as in Brendle, Fischer, and
Khomskii~\cite{Brendle.Fischer.etal:19}.


\subsection{Comparing the complexity of the analogs in computability}

The main setting for our analogy is given by the Boolean algebra of
computable sets modulo finite differences. We consider maximal towers, the
closely related maximal almost disjoint sets, and thereafter ultrafilter
bases and maximal independent sets. As already demonstrated in the
above-mentioned papers~\cite{Brendle.Brooke.ea:14, Greenberg.etal:18,
Rupprecht:thesis, Rupprecht:10},
the relative position of a cardinal characteristic tends to correspond to the
relative computational complexity of the associated class of objects.

The usual formal definitions of computation relative to an oracle only
directly apply to functions $f \colon \NN \to \NN$, and hence to subsets
of~$\NN$ (simply called sets from now on), which can be identified with their
characteristic functions. The complexity of other objects is studied
indirectly, via names that are functions on~$\NN$ giving discrete
representations of the object in question. A particular choice of names has
to be made. For instance, real numbers can be named by rapidly converging
Cauchy sequences of rational numbers.

The witnesses for cardinal characteristics are always uncountable. In
contrast, in our setting, the analogous objects are countable. They will be
considered as sequences of sets rather than unordered collections. For, a
single set~$X$ can be used as a name for such a sequence of sets: Let
$X^{[n]}$ denote the ``column'' $ \{ u \colon \langle u, n \rangle \in X
\}$.\footnote{For definiteness, we employ the usual computable Cantor pairing
function $\langle x,n \rangle$. Note that $\langle x,n \rangle \ge x,n $.
This property is useful in simplifying notation in some of the constructions
below.} To every set~$X$, we can associate a sequence $\seq{X_n}\sN n$ in a
canonical way by setting $X_n = X^{[n]}$. (When introducing terminology, we
will sometimes ignore the difference between $\seq{X_n}\sN n$ and~$X$.) An
alternate viewpoint is that a set~$X$ is a name for the unordered collection
of sets in its coded sequence. Although such a name includes more information
than is in the unordered family, this information is suppressed when we
quantify over all names; our results can be read in this context.

With this naming system, one can now use sequences as oracles in
computations. We view the combinatorial classes of sequences as mass
problems. To measure their relative complexity, we compare them via
\emph{Medvedev reducibility} $\le_s$: Let~$\+C$ and~$\+D$ be sets of
functions on~$\NN$, also known as \emph{mass problems}. One says that~$\+C$
is Medvedev reducible to~$\+D$ and writes $\+C\le_s \+D$ if there is a Turing
functional~$\Theta$ such that $\Theta^g \in \+C$ for each $g \in \+D$. Less
formally, one says that functions in~$\+D$ uniformly compute functions
in~$\+C$. We will also refer to the weaker \emph{Muchnik reducibility}:
$\+C\le_w \+D$ if each function in~$\+D$ computes a function in~$\+C$.

With subsequent research in mind, we will set up our framework to apply to
general countable Boolean algebras rather than merely the Boolean algebra of
the computable sets. Throughout, we fix a countable Boolean algebra~$\mathbb
B$ of subsets of~$\NN$ closed under finite differences. Our basic objects
will be sequences of sets in~$\mathbb B$. We will obtain meaningful results
already when we fix a countable Turing ideal~$\+I$ and let~$\mathbb B$ be the
sets with degree in~$\+I$. While we mainly study the case when~$\B$ consists
of the computable sets, in Section~\ref{sec:other_BAs}, we briefly consider
two other cases: the $K$-trivial sets and the primitive recursive sets.

\subsection{The mass problem \boldmath\texorpdfstring{$\+T_\B$}{T\_B} of
maximal towers}

\begin{definition}\label{def:tower}
We say that a sequence $\seq{G_n}\sN n$ of sets in~$\mathbb B$ is a $\mathbb
B$-\emph{tower} if $G_0= \NN$, $G_{n+1} \sub^* G_n$, and $G_n\setminus
G_{n+1}$ is infinite for each~$n$. If~$\mathbb B$ consists of the computable
sets, we use the term \emph{tower of computable sets}.
\end{definition}


\begin{definition} \label{df:assoc_fcn}
We say that a function~$p$ is \emph{associated with} a tower~$G$ if~$p$ is
strictly increasing and $p(n ) \in \bigcap_{i\le n} G_i$ for each~$n$.
\end{definition}

The following fact is elementary.

\begin{fact} \label{fa:assoc fcn}
A tower~$G$ uniformly computes a function~$p$ associated with it.
\end{fact}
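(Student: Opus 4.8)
The plan is to unwind the definitions and give a direct uniform construction of the associated function~$p$ from the tower~$G = \seq{G_n}\sN n$, presented as a set~$X$ whose columns are the sets~$G_n$. Recall that we need~$p$ increasing with $p(n) \in \bigcap_{i \le n} G_i$ for each~$n$; the content of the fact is that such a~$p$ can be produced by a single Turing functional applied to~$X$.

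First I would observe that, since each $G_i$ is a set in~$\B$ coded as a column of~$X$, the set $\bigcap_{i \le n} G_i$ is uniformly computable from~$X$: given~$n$ and a candidate~$u$, we check whether $\langle u, i\rangle \in X$ for each $i \le n$, which is a finite computation. Next I would note that each of these finite intersections is nonempty — in fact infinite. This is where the tower hypotheses come in: $G_{i+1} \sub^* G_i$ for each~$i$, so by a finite induction $G_n \sub^* G_i$ for all $i \le n$, hence $G_n \setminus \bigcap_{i \le n} G_i$ is finite; since $G_n \setminus G_{n+1}$ is infinite, $G_n$ itself is infinite, and therefore $\bigcap_{i\le n} G_i$ is infinite, in particular nonempty and unbounded in~$\NN$.

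With these two observations in hand, the construction of~$p$ is a straightforward search: having defined $p(0), \dots, p(n-1)$, let $p(n)$ be the least $u > p(n-1)$ (and $u \geq 0$ when $n = 0$) such that $\langle u, i \rangle \in X$ for all $i \le n$. The search terminates because $\bigcap_{i \le n} G_i$ is infinite, so it contains arbitrarily large elements. By construction~$p$ is strictly increasing and $p(n) \in \bigcap_{i\le n} G_i$, as required, and the whole procedure is a single algorithm with oracle~$X$, so the computation is uniform.

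There is no real obstacle here; the only point that requires the tower axioms (rather than being pure bookkeeping) is the nonemptiness of the finite intersections, which I would spell out via the $\sub^*$-chain argument above. If one wanted to be even more economical, one could define $p(n) \in G_0 \cap \dots \cap G_n$ without insisting on strict monotonicity and then pass to an increasing subsequence, but doing the monotone search directly is cleaner and still uniform, so that is the route I would take.
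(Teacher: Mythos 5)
Your proof is correct and follows essentially the same route as the paper's: the paper defines the functional $\Phi$ by $\Phi^G(0)=\min(G_0)$ and lets $\Phi^G(n+1)$ be the least element of $\bigcap_{i\le n+1}G_i$ greater than $\Phi^G(n)$, which is exactly your monotone search. You additionally spell out why the finite intersections are infinite (the $\sub^*$-chain argument), which the paper leaves implicit as "elementary," but there is no substantive difference.
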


\begin{proof}
Let~$\Phi$ be the Turing functional such that $\Phi^G(0) = \min (G_0)$, and
$\Phi^G(n+1)$ is the least number in~$\bigcap_{i\le n+1} G_i$ greater than
$\Phi^G(n)$. This~$\Phi$ establishes the required uniform reduction.
\end{proof}

\begin{definition}
Given a countable Boolean algebra~$\mathbb B$ of sets, the mass problem
$\+T_\mathbb B$ is the class of sets~$G$ such that $\seq {G_n}\sN n$ is a
$\mathbb B$-tower that is \emph{maximal}, i.e., such that for each infinite
set $R \in \mathbb B$, there is~$n$ such that $R\setminus G_n $ is infinite.
\end{definition}

Clearly, being maximal implies that no associated function is computable. In
particular, a maximal tower is never computable. (Note that our notion of
maximality only requires that the tower cannot be extended from below, in
keeping with our set-theoretic analogy.)

\subsection{The mass problem \boldmath\texorpdfstring{$\+U_\B$}{U\_B} of
ultrafilter bases}

We now define the mass problem~$\+U_\B$ corresponding to the ultrafilter
number. Since all filters of our Boolean algebras are countable, any base
will compute a linearly ordered base by taking finite intersection. So for
measuring the relative complexity via Medvedev reducibility, we can restrict
ourselves to linearly ordered bases. Importantly, we require that each
ultrafilter base is a tower; in particular, the difference between a set and
its successor is infinite. (Asking that an ultrafilter base is linearly
ordered is not always possible in the setting of set theory, as discussed in
the introduction.)

\begin{definition} \label{def:UFB}
Given a countable Boolean algebra~$\mathbb B$ of sets, let~$\+U_\B$ be the
class of sets~$F $ such that~$F$ is a $\mathbb B$-tower as in
Definition~\ref{def:tower} and for each set $R\in \mathbb B$, there is~$n$
such that $F_n \sub^* \overline R$ or $F_n \sub^* R$. We will call a set~$F$
in~$\+U_\B$ a $\mathbb B$-\emph{ultrafilter base}.
\end{definition}

\noindent Each ultrafilter base is a maximal tower. In the cardinal setting,
one has $\mathfrak t \le \mathfrak u$. Correspondingly, since $\+U_\B \sub
\+T_\B$, we trivially have $\+T_\B \le_s \+U_\B $ via the identity reduction.
The following indicates that for many natural Boolean algebras, ultrafilter
bases necessarily have computational strength.

\begin{prop}\label{prop:CDno}
Given a Turing ideal~$\+K$, let ~$\mathbb B$ be the Boolean algebra of sets
with degree in~$\+ K$. Then for each $\mathbb B$-ultrafilter base~$F$ and
associated function~$p$ in the sense of Definition~\ref{df:assoc_fcn}, the
function~$p$ is not dominated by a function with Turing degree in~$\+K$.
\end{prop}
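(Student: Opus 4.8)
The plan is to argue by contradiction: suppose some function~$g$ of Turing degree in~$\+K$ dominates~$p$, and use~$g$ to build a set $R\sub\NN$ of Turing degree in~$\+K$ — hence $R\in\mathbb B$, since $\+K$ is a Turing ideal and $\mathbb B$ consists of the sets of degree in~$\+K$ — with the property that $F_n\cap R$ and $F_n\cap\ol R$ are both infinite for every~$n$. Since $F_n\cap R$ infinite is the same as $F_n\setminus\ol R$ infinite, and $F_n\cap\ol R$ infinite is the same as $F_n\setminus R$ infinite, such an~$R$ witnesses that neither $F_n\sub^*R$ nor $F_n\sub^*\ol R$ holds for any~$n$, contradicting the assumption that~$F$ is a $\mathbb B$-ultrafilter base.

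First I would normalize~$g$: replacing it by $n\mapsto n+\max_{i\le n}g(i)$ yields a strictly increasing function that still dominates~$p$ and has degree below that of~$g$, hence in~$\+K$; call it~$g$ again, and fix~$N$ with $g(m)\ge p(m)$ for all $m\ge N$. Recall from Definition~\ref{df:assoc_fcn} that~$p$ is increasing, so $p(m)\ge m$ for all~$m$. Next I would define a $g$-computable sequence $0=n_0<n_1<n_2<\cdots$ by $n_{t+1}=g(n_t)+1$; this is strictly increasing and unbounded because $g(x)\ge x$. Set $R=\bigcup_{t\in\NN}[n_{2t},n_{2t+1})$, so that $\ol R=\bigcup_{t\in\NN}[n_{2t+1},n_{2t+2})$. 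Clearly $R\leT g$, so $R\in\mathbb B$. To verify the splitting property, fix~$n$ and take any~$t$ with $n_{2t}\ge\max(N,n)$: then $n_{2t}\le p(n_{2t})\le g(n_{2t})<n_{2t+1}$, so $p(n_{2t})\in[n_{2t},n_{2t+1})\sub R$, while $p(n_{2t})\in\bigcap_{i\le n_{2t}}F_i\sub F_n$; as~$p$ is strictly increasing these values are pairwise distinct, so $F_n\cap R$ is infinite. Running the same argument with $2t+1$ in place of~$2t$ puts $p(n_{2t+1})\in[n_{2t+1},n_{2t+2})\sub\ol R$ and in~$F_n$, so $F_n\cap\ol R$ is infinite as well, completing the contradiction.

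The point that forces this somewhat indirect route — and the main obstacle — is that one has no handle on the individual columns~$F_n$: the set~$F$ encoding the tower may be far more complex than anything in~$\mathbb B$, so there is no way to build~$R$ by explicitly steering chosen elements into or out of a given~$F_n$. Instead~$R$ must be defined from~$g$ alone, with the splitting property verified only afterwards, using the associated function~$p$ as the sole link between~$g$ and the~$F_n$. The device that makes this work is closing~$g$ under iteration to produce the intervals $[n_t,n_{t+1})$: since $p(n_t)$ is squeezed between~$n_t$ and $g(n_t)<n_{t+1}$, the (unknown) value $p(n_t)$ is nonetheless pinned inside the $t$-th interval, so merely alternating the intervals between~$R$ and~$\ol R$ automatically deposits infinitely many members of every~$F_n$ on each side. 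A secondary point to get right is the normalization of~$g$ to a strictly increasing function of degree still in~$\+K$, which is routine.
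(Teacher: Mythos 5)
Your proof is correct and takes essentially the same route as the paper's: iterate the dominating function to get a $\+K$-computable increasing sequence $\seq{n_t}$ whose consecutive intervals each trap a value of the associated function~$p$, then alternate the intervals to build a set in~$\mathbb B$ that splits every~$F_n$ into two infinite pieces. The only differences are cosmetic — you normalize~$g$ to handle eventual rather than pointwise domination and write out the verification that the paper leaves implicit.
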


\begin{proof}
Assume that there is a function $f \ge p$ in~$\+K$. The conditions {$n_0=0$}
and $n_{k+1} = f(n_k)+1$ define a sequence that is computable from some
oracle in~$\+K$, and for every~$k$ we have that $[n_k, n_{k+1})$ contains an
element of $\bigcap_{i\leq k} F_i$. So the set
\[
E= \bigcup_{i\in\omega}\; [n_{2i}, n_{2i+1})
\]
is in~$\+K$, and clearly $F_n \not \sub^* E$ and $F_n \not \sub^* \ol E$ for
each~$n$. Therefore,~$F$ is not a $\mathbb B$-ultrafilter base.
\end{proof}

\subsection{The Boolean algebra of computable sets}

We finish the introduction by summarizing our results in the case that~$\B$
is the Boolean algebra of all computable sets. By Theorem~\ref{thm: NonLow},
every non-low set computes a set in~$\+T_\B$, and this is uniform. This is
not a characterization, however, because by Corollary~\ref{freaky}, every
noncomputable c.e.\ set computes a maximal tower. On the other hand, we know
that there are noncomputable (necessarily low) sets that do not compute
maximal towers; in particular, no 1-generic $\DII$-set does so. This is
because 1-generic $\DII$-sets are index guessable by
Theorem~\ref{thm:index_predictable}, and by Proposition~\ref{pr:halt}, no
index guessable set can compute a maximal tower. Here, an oracle~$G$ is
\emph{index guessable} if~$\emptyset'$ can find a computable index for
$\phi_e^G$ uniformly in~$e$, provided that~$\phi_e^G$ is computable. We do
not know whether index guessability characterizes the oracles that are unable
to compute a maximal tower. It seems unlikely; index guessability appears to
be stronger than necessary.

As already mentioned, in the setting of cardinal characteristics, $\mathfrak
t < \mathfrak u$ is consistent with ZFC. Since non-low oracles can be
computably dominated, it follows from Proposition~\ref{prop:CDno} that there
is a member of~$\+T_\B$ that does not compute any member of~$\+U_\B$. In
other words, $\+U_\B \not \le_w \+T_\B$ in the case that~$\B$ consists of the
computable sets.

The separation above only uses the fact that members of~$\+U_\B$ are not
computably dominated; in fact, they are \emph{high}. As we show in
Theorems~\ref{thm:UFB->high} and~\ref{thm: High},~$\+U_\B$ is Medvedev
equivalent to the mass problem of dominating functions. In
Section~\ref{sec:4}, we prove that the mass problem~$\+I_\B$ of maximal
independent families is also Medvedev equivalent to the mass problem of
dominating functions. Thus, in the case that~$\B$ is the Boolean algebra of
computable sets, we have $\+U_\B\equiv_s\+I_\B$. Interestingly, we do not
have a direct proof. Contrast this with the equivalence of~$\+T_\B$ and
$\+A_\B$, the mass problem of maximal almost disjoint families; this
equivalence is direct and holds for an arbitrary Boolean algebra, as we will
see presently.

\section{Basics of the mass problems \texorpdfstring{$\+T_\B$}{T\_B}}

\subsection{The equivalent mass problems \boldmath\texorpdfstring{$\+T_\B$
and $\+A_\B$}{T\_B and A\_B}}

Recall that in set theory, the almost disjointness number~$\mathfrak a$ is
the least possible size of a maximal almost disjoint (MAD) family of subsets
of~$\omega$. In our analogous setting, we call a sequence $\seq {F_n}\sN n$
of sets in~$\mathbb B$ \emph{almost disjoint} (AD) if each~$F_n$ is infinite
and $F_n \cap F_k $ is finite for distinct~$n$ and~$k$.

\begin{definition}
In the context of a Boolean algebra~$\mathbb B$ of sets, the mass
problem~$\+A_\mathbb B$ is the class of sets~$F$ such that $\seq {F_n}\sN n$
is a \emph{maximal almost disjoint} (MAD) family in~$\B$. Namely, the
sequence is AD, and for each infinite set $R \in \B$, there is~$n$ such that
$R\cap F_n $ is infinite.
\end{definition}

\begin{fact}\label{fa:ATA}
$\+A_\mathbb B \le_s \+T_\mathbb B \le_s \+A_\mathbb B$.
\end{fact}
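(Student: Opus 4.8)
The plan is to establish each of the two Medvedev reductions in Fact~\ref{fa:ATA} by exhibiting a single Turing functional that acts on a set coding one type of family as its columns and outputs a set whose columns code a family of the other type. In both directions the combinatorics are elementary; the only nontrivial check is the maximality clause for the family being produced.

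For $\+T_\B \le_s \+A_\B$, I would start from a set $F$ coding a MAD family $\seq{F_n}\sN n$ in~$\B$ and set $G_n = \ol{\bigcup_{i<n} F_i}$, so that $G_0 = \NN$ and $G_{n+1} = G_n \setminus F_n$. Each $G_n$ is a finite Boolean combination of columns of~$F$, hence lies in~$\B$ and is uniformly computable from~$F$, so the $G_n$ can be assembled into the columns of a set $G = \Theta^F$ for a fixed functional $\Theta$. The verification is routine: $G_{n+1} \sub G_n$ (so certainly $G_{n+1} \sub^* G_n$); and $G_n \setminus G_{n+1} = F_n \setminus \bigcup_{i<n} F_i$ differs from the infinite set $F_n$ by a finite set, since the $F_i$ are pairwise almost disjoint, hence is infinite. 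Thus $\seq{G_n}$ is a $\B$-tower. For maximality, given an infinite $R \in \B$, MADness of $\seq{F_n}$ yields an~$n$ with $R \cap F_n$ infinite, and then $R \setminus G_{n+1} \supseteq R \cap F_n$ is infinite, as required.

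For $\+A_\B \le_s \+T_\B$, I would start from a set $G$ coding a $\B$-tower $\seq{G_n}\sN n$ and set $F_n = G_n \setminus G_{n+1}$, assembling these into the columns of $F = \Psi^G$. Each $F_n$ lies in~$\B$, is uniformly computable from~$G$, and is infinite by the definition of a tower. For almost disjointness, if $n < m$ then iterating $G_{i+1} \sub^* G_i$ gives $G_m \sub^* G_{n+1}$, so $F_m \sub^* G_{n+1}$, while $F_n \cap G_{n+1} = \emptyset$; hence $F_m \cap F_n$ is finite. The one step that takes a moment---and the place I expect to have to be careful---is maximality. Fix an infinite $R \in \B$. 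If $R \sub^* G_n$ for every~$n$, then the tower $\seq{G_n}$ is not maximal, a contradiction; so there is a least $n_0$ with $R \setminus G_{n_0}$ infinite, and $n_0 \ge 1$ because $R \setminus G_0 = \emptyset$. By minimality $R \setminus G_{n_0 - 1}$ is finite, so $R \setminus G_{n_0}$ agrees up to a finite set with $R \cap (G_{n_0-1} \setminus G_{n_0}) = R \cap F_{n_0-1}$; hence $R \cap F_{n_0-1}$ is infinite. Therefore $\seq{F_n}$ is MAD.

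Both constructions are plainly uniform, which yields the two reductions and hence the equivalence. The main obstacle, such as it is, is the maximality verification in the $\+A_\B \le_s \+T_\B$ direction: one must pass from ``$R$ escapes some $G_n$'' to ``$R$ meets some difference set $F_{n-1}$ infinitely,'' while keeping track of the fact that a tower's inclusions hold only modulo finite sets. If one prefers to avoid that bookkeeping, one can first replace the given tower by $\seq{\bigcap_{i\le n} G_i}\sN n$, which is uniformly computable from~$G$, is termwise almost equal to $\seq{G_n}$, and is again a maximal $\B$-tower, now with genuinely decreasing columns.
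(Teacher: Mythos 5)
Your proof is correct and follows essentially the same route as the paper: the two functionals you define are exactly the paper's $\mathrm{Diff}(G)_n = G_n\setminus G_{n+1}$ and $\mathrm{Cp}(F)_n = \ol{\bigcup_{i<n}F_i}$, and your maximality verifications (including the careful pigeonhole-style step passing from ``$R$ escapes $G_{n_0}$ but not $G_{n_0-1}$'' to ``$R$ meets $F_{n_0-1}$ infinitely'') fill in details the paper leaves to the reader. No gaps.
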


\begin{proof}
We suppress the subscript~$\mathbb B$. To check that~$\+A \le_s \+T$, given a
set~$G$, let $\mathrm{Diff}(G) $ be the set~$F$ such that $F_n = G_n\setminus
G_{n+1}$ for each~$n$. Clearly, the operator $\mathrm{Diff} $ can be seen as
a Turing functional. If~$G$ is a maximal $\B$-tower, then $F=\mathrm{Diff}(G)
$ is MAD. For, if~$R\in\B$ is infinite, then $R\setminus G_n$ is infinite for
some~$n$, and hence $R\cap F_i$ is infinite for some $i<n$.

For $\+T \le_s \+A$, given a set~$F$, let $G =\mathrm{Cp}(F) $ be the set
such that
\[
x \in G_n \lra \fa i < n \, [ x \not \in F_n].
\]
Again,~$\mathrm {Cp}$ is a Turing functional. If~$F$ is an almost disjoint
family of sets from~$\B$, then~$G$ is a $\B$-tower, and if~$F$ is MAD,
then~$G$ is a maximal tower.
\end{proof}

Recall that a maximal tower is not computable. Hence no MAD family is
computable. (This corresponds to the cardinal characteristics being
uncountable.)

\subsection{Descriptive complexity and index complexity for maximal towers}

For the rest of this section, as well as the subsequent three sections, we
will mainly be interested in the case that~$\B$ is the Boolean algebra of all
computable sets. We will omit the parameter~$\B$ when we name the mass
problems. In the final section, we will consider other Boolean algebras.

Besides looking at the relative complexity of mass problems such as~$\+T$
and~$\+U$, one can also look at the individual complexity of their members
(as sets encoding sequences). Recall that a characteristic index for a
set~$M$ is a number~$e$ such that $\chi_M= \phi_e$. The following two
questions arise:

\begin{enumerate}
\item\label{Q1}
How low in the arithmetical hierarchy can the set be located?
\item
How hard is it to find characteristic indices for the sequence
\mbox{members}?
\end{enumerate}


\subsubsection*{Arithmetical complexity}

\begin{fact}\label{fa:silly}
No maximal tower~$G$ is c.e., and no MAD set is co-c.e.
\end{fact}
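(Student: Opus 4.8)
The plan is to prove the two assertions separately, deriving the second from the first.

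\emph{No maximal tower is c.e.} Recall that, as observed right after the definition of $\+T$, a maximal tower admits no computable associated function in the sense of Definition~\ref{df:assoc_fcn}: the range of such a $p$ would be an infinite computable set $R$ with $R\setminus G_n$ finite for all $n$, contradicting maximality. So it suffices to show that if the coding set $G$ is c.e., then $\seq{G_n}\sN n$ has a computable associated function. Note that $G$ being c.e.\ makes the columns $G_n=G^{[n]}$ \emph{uniformly} c.e.\ (even though, being a tower of computable sets, each $G_n$ is in fact computable). This is enough: from $G_{n+1}\sub^* G_n$ one gets $G_n=^*\bigcap_{i\le n}G_i$, so each $\bigcap_{i\le n}G_i$ is infinite. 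Now define $p$ by recursion: put $p(0)=0$ (recall $G_0=\NN$); given $p(n)$, dovetail the enumerations of $G_0,\dots,G_{n+1}$ and search over candidates $m>p(n)$, letting $p(n+1)$ be the first such $m$ confirmed to lie in all of $G_0,\dots,G_{n+1}$. Since $\bigcap_{i\le n+1}G_i$ is infinite and each $G_i$ is c.e., this search halts; hence $p$ is total computable, strictly increasing, and $p(n)\in\bigcap_{i\le n}G_i$ for all $n$ --- a contradiction.

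\emph{No MAD set is co-c.e.} I would deduce this from the first assertion via the functional $\mathrm{Cp}$ from the proof of Fact~\ref{fa:ATA}. Suppose $F$ codes a MAD family and is co-c.e. Then each column complement $\ol{F_i}$ is c.e., uniformly in $i$. Let $G=\mathrm{Cp}(F)$, so that $G_n=\bigcap_{i<n}\ol{F_i}$. A finite intersection of uniformly c.e.\ sets is uniformly c.e., so $G$ is c.e. But by Fact~\ref{fa:ATA}, since $F$ is MAD, $G$ is a maximal tower, contradicting the first part. (Alternatively, one can argue directly: since $F_n\setminus\bigcup_{i<n}F_i$ is cofinite in $F_n$, each $\bigcap_{i<n}\ol{F_i}$ is an infinite c.e.\ set, so one can computably build $r_0<r_1<\cdots$ with $r_k\in\bigcap_{i<k}\ol{F_i}$; then $R=\set{r_k}{k\in\NN}$ is an infinite computable set with $R\cap F_n$ finite for every $n$, contradicting maximality.)

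The only genuine subtlety is in the first part: being c.e.\ is by itself a weak hypothesis --- maximal towers can even be computable from $\ES'$ --- so the contradiction must exploit the tower axioms. The crucial point is that those axioms force $\bigcap_{i\le n}G_i$ to be infinite, which is exactly what makes the dovetailed search for associated values terminate; the rest is bookkeeping, and the MAD statement then follows formally via $\mathrm{Cp}$.
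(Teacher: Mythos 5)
Your proof is correct and follows essentially the same route as the paper: the first part observes that a c.e.\ coding set yields a computable associated function whose range would extend the tower, and the second part applies the functional $\mathrm{Cp}$ from Fact~\ref{fa:ATA}, which turns a co-c.e.\ MAD set into a c.e.\ maximal tower. You have merely spelled out the dovetailing search that the paper leaves implicit.
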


\begin{proof}
For the first statement, note that otherwise, there is a computable
function~$p$ associated with~$G$ in the sense of~\ref{df:assoc_fcn}. The
range of~$p$ extends the tower~$G$, contrary to its maximality.

For the second statement, note that the reduction~$\mathrm{Cp}$ introduced in
the proof of Fact~\ref{fa:ATA} to show that $\+T \le_s \+A$ turns a co-c.e.\
set~$F$ into a c.e.\ set~$G$.
\end{proof}

We will return to Question~\eqref{Q1} in Section~\ref{s:3}, where we show
that c.e.\ MAD sets exist in every nonzero c.e.\ Turing degree, and that some
ultrafilter base is co-c.e.

\subsubsection*{Complexity of finding characteristic indices for the sequence
members}

In several constructions of towers $\seq{G_n}\sN n$ below, such as in
Corollary~\ref{freaky} and Theorem~\ref{th:coce UFB}, the oracle~$\ES''$ is
able to compute, given~$n$, a characteristic index for~$G_n$. The
oracle~$\ES'$ does not suffice by the following result.

\begin{prop}\label{pr:halt}
Suppose that~$G $ is a maximal tower. There is no computation procedure with
oracle~$\Halt $ that computes, from input~$n$, a characteristic index
for~$G_n$.
\end{prop}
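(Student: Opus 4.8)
The plan is to argue by contradiction: assume $\Halt$ computes, uniformly in $n$, a characteristic index $g(n)$ for $G_n$, i.e.\ $\phi_{g(n)} = \chi_{G_n}$ with $g \leq_T \Halt$. From this assumption I want to build an infinite computable set $R$ with $R \sub^* G_n$ for every $n$ (in fact $R \sub G_n$ for all but finitely many $n$, or even all $n$ after a shift), contradicting maximality of the tower. The key point is that although $g$ itself is only $\Halt$-computable, the \emph{values} $G_n(x)$ for small $x$ can be approximated, and we have one extra lever: $G$ itself is a tower, so $G_{n+1} \sub^* G_n$ and $G_n \setminus G_{n+1}$ is infinite, and (by Fact~\ref{fa:assoc fcn}) $G$ uniformly computes an associated function $p$ with $p(n) \in \bigcap_{i \le n} G_i$.

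First I would fix a computable approximation $g_s(n)$ to $g(n)$ from below in the $\Halt$-sense (using the limit lemma, $g(n) = \lim_s g_s(n)$ with $g_s$ computable), and similarly note that from a candidate index $e$ we can run $\phi_e$ for $s$ steps to get a partial finite approximation to the set it computes. The strategy is to define $R$ by a computable ``true stage'' style construction: we try to enumerate an increasing sequence $x_0 < x_1 < \cdots$ into $R$ such that, at the stage where we commit $x_k$, our current best guess at the indices $g(0), \dots, g(k)$ all certify (within the steps run so far) that $x_k$ lies in $G_0 \cap \cdots \cap G_k$. Because the approximation to each $g(i)$ eventually stabilizes and each $\phi_{g(i)}$ is total and correct, for each $k$ there are genuinely infinitely many candidates $x$ with $x \in \bigcap_{i \le k} G_i$ (namely a tail of $\range(p)$), so the search always succeeds; the only worry is that we commit $x_k$ on the basis of a wrong guess about some $g(i)$.

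The main obstacle, then, is exactly this: guesses about $g(i)$ that later turn out wrong could cause us to put into $R$ an element $x_k$ that is \emph{not} in some $G_i$. To handle this I would use the usual finite-injury-free ``hat trick'': when the approximation to $g(i)$ changes (which happens only finitely often, for each $i$), we do not remove elements from $R$ — instead we observe that only finitely many $x_k$'s were committed under the now-discarded guesses, so altering those finitely many membership requirements only affects $R$ on a finite set, which is harmless for $\sub^*$. More precisely, I would run the construction so that $R$'s membership of $x_k$ is decided using the \emph{final} value of $g_s(i)$ for all $i \le k$ at the moment of commitment, but since we cannot know it is final, we accept that the first (say) $N_i$ many commitments may be based on stale values of $g(i)$; past that point all commitments use the correct $\phi_{g(i)}$, and correctness of $\phi_{g(i)} = \chi_{G_i}$ guarantees $x_k \in G_i$ for all $i \le k$ once $k$ exceeds $\max_i N_i$ in the relevant sense. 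One then checks $R$ is infinite (the search never gets stuck, because a correct tail of $\range(p)$ always supplies fresh witnesses), $R$ is computable (the whole construction is a computable procedure — no oracle is consulted, we only run $\phi$'s and the computable approximation $g_s$), and $R \sub^* G_n$ for every $n$ (all but the first finitely many elements of $R$ lie in $G_n$, by the commitment rule and correctness). This contradicts the maximality of $G$, completing the proof.

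A cleaner packaging, which I would probably adopt in the writeup: use the limit lemma to get $g_s$, and define the computable set $R = \{ x_k : k \in \omega\}$ where $x_k$ is found by searching, at stage $s_k$ (chosen large), for the least $x > x_{k-1}$ such that for every $i \le k$, $\phi_{g_{s_k}(i)}(x)\!\downarrow = 1$ within $s_k$ steps; feasibility of the search for every $k$ is guaranteed because eventually $g_{s}(i)$ is correct for all $i \le k$ and $G_i$ is infinite with infinite intersection along $\range(p)$. Then for each fixed $n$, let $t_n$ be a stage past which $g_s(i)$ has settled for all $i \le n$; every $x_k$ committed at a stage $s_k \ge t_n$ with $k \ge n$ satisfies $x_k \in G_n$, so $R \setminus G_n$ is finite, i.e.\ $R \sub^* G_n$, and $R$ is an infinite computable subset contradicting maximality.
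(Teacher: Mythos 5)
Your proof is correct, and its core idea coincides with the paper's: apply the limit lemma to the $\Halt$-computable index function and observe that, for each fixed $n$, the approximation is wrong only finitely often, so the resulting errors are confined to a finite set and are invisible to $\subseteq^*$. The packaging differs. The paper defines, for each $n$ and $x$, the least $s>x$ with $\phi_{f(n,s),s}(x)\downarrow$ and uses the output to produce a computable set $\hat G$ with $\hat G_n =^* G_n$ for all $n$; since towerhood and maximality are preserved under finite perturbation of the columns, $\hat G$ would be a computable maximal tower, contradicting the observation (made right after the definition of $\+T_\B$) that a maximal tower is never computable. You instead inline that last step and construct the forbidden computable pseudo-intersection $R$ directly by a stagewise search with commitments. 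Both routes work; the paper's is shorter because the ``least $s>x$'' trick eliminates your bookkeeping about settling times and commitment stages, while yours avoids having to note that $=^*$-changes to the columns preserve maximality. One presentational point: in your ``cleaner packaging,'' the phrase ``at stage $s_k$ (chosen large)'' cannot be implemented as stated, since you cannot computably choose $s_k$ large enough in advance; it should be replaced by the dovetailed search your longer description already gives (at stage $s$, look for the least $x$ with $x_{k-1}<x\le s$ such that $\phi_{g_s(i),s}(x)\downarrow=1$ for all $i\le k$, and commit when one appears).
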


\begin{proof}
Assume the contrary. Then there is a computable function~$f$ such that
$\phi_{\lim_s f(n,s)}$ is the characteristic function of~$G_n$. Let~$\hat G$
be defined as follows. Given~$n$ and~$x$, compute the least $s> x$ such that
$\phi_{f(n,s),s}(x) \DA$. If the output is not~$0$, put~$x$ into~$\hat G_n$.
Clearly~$\hat G$ is computable. Since $G_n = ^* \hat G_n$ for each~$n$,~$\hat
G$ is a maximal tower, contrary to Fact~\ref{fa:silly}, or to the earlier
observation that maximal towers cannot be computable.
\end{proof}

\section{\texorpdfstring{Complexity of $\+T$ and of $\+U$}{Complexity of T
and of U}}

In this section, we compare our two principal mass problems, maximal towers
and ultrafilter bases, to well-known benchmark mass problems: non-lowness and
highness. We also define index guessability. No index guessable oracle
computes a maximal tower. We show that every 1-generic $\DII$-set is index
guessable.

As we said above, we restrict ourselves to the case that~$\B$ is the Boolean
algebra of computable sets, and usually drop the subscripts $\B$.

\subsection{Maximal towers, non-lowness, and index guessability}

We now show that each non-low oracle computes a set in~$\+T$. The result is
uniform in the sense of mass problems. Let NonLow denote the class of oracles
$Z$ such that $Z' \not \le_T \ES'$.

\begin{thm}\label{thm: NonLow}
$\+T \le_s \mathrm{NonLow}$.
\end{thm}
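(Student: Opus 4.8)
The plan is to construct a single Turing functional $\Theta$ such that $\Theta^Z$ codes a maximal tower of computable sets whenever $Z'\not\leT\Halt$. Non-lowness will be used only in the following form: $Z$ uniformly computes the canonical nondecreasing approximation $\seq{g_s}$ given by $g_s(i)=1$ iff $\phi_i^Z(i)$ halts in at most $s$ steps, so $\lim_s g_s(i)=Z'(i)$ for all $i$, and since $Z$ is non-low the limit set $Z'$ is not $\DII$, i.e.\ has no computable approximation. Writing $G=\Theta^Z$ and $G_n=G^{[n]}$, we must meet the structural requirements ($G_0=\NN$, $G_{n+1}\sub^* G_n$, and $G_n\setminus G_{n+1}$ infinite) together with, for each $e$, the maximality requirement $R_e$: if $\phi_e$ is total and $D_e=\{x:\phi_e(x)=1\}$ is infinite, then $D_e\setminus G_n$ is infinite for some $n$. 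Note that, since $G_{n+1}\sub^* G_n$, once $D_e\setminus G_m$ is infinite it stays infinite for all $n\ge m$, so $R_e$ only needs to succeed at one level.

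The first point is to arrange that each column $G_n$ is a genuine computable set even though $\Theta$ only has the oracle $Z$. The trick is to build the columns ``in the limit''. Fix an increasing computable function $r$ with $r(0)=0$ and, uniformly in a finite binary string $\sigma$, a characteristic index $j(\sigma)$ of a \emph{total} computable $\{0,1\}$-valued function, with $j(\langle\rangle)$ the index of the constant $1$. Put
\[
\Theta^Z(\langle x,n\rangle)=\phi_{j(g_x\uhr r(n))}(x),
\]
which is everywhere defined, so $\Theta^Z$ is total and $G_0=\NN$. For each fixed $n$, the string $g_x\uhr r(n)$ is nondecreasing in $x$ and changes at most $r(n)$ times, hence equals $Z'\uhr r(n)$ for all large $x$; so $G_n$ agrees off a finite set with the computable set $\tilde G_n:=\{x:\phi_{j(Z'\uhr r(n))}(x)=1\}$. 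A set that differs finitely from a computable set is computable, so every $G_n$ is computable, and (since finite differences do not affect $\sub^*$, $\setminus$ up to finite sets, or infiniteness) all tower and maximality conditions for $\seq{G_n}$ may be checked on the ``ideal'' sequence $\seq{\tilde G_n}$ instead.

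It then remains to choose $r$ and $j$. They must be chosen so that, for \emph{every} $X\in\cantor$, the sequence $\tilde G^X_n:=\{x:\phi_{j(X\uhr r(n))}(x)=1\}$ is a tower: $\tilde G^X_0=\NN$, $\tilde G^X_{n+1}\sub^*\tilde G^X_n$, and $\tilde G^X_n\setminus\tilde G^X_{n+1}$ infinite; these are finitary constraints on $j$. The key additional property, responsible for maximality, is a decoding property: whenever an infinite computable set $D$ satisfies $D\sub^*\tilde G^X_n$ for every $n$ (so that $D$ witnesses that $\seq{\tilde G^X_n}$ is \emph{not} maximal), the string $X$ can be recovered computably from an index for $D$ --- concretely, one obtains uniformly a computable approximation whose limit is $X$, by reading off how the positions of the elements of $D$ inside the successive $\tilde G^X_n$ are forced by the ``thinning pattern'' that $j$ uses to encode successive bits of $X$ into successive levels. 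Applying this with $X=Z'$: if $\seq{G_n}$ were not maximal, then (as $G_n=^*\tilde G_n$) some infinite computable $D$ has $D\sub^*\tilde G_n$ for all $n$, whence by the decoding property $Z'\in\DII$, contradicting that $Z$ is non-low. Hence $\Theta^Z$ is a maximal tower and $\Theta$ witnesses $\+T\le_s\mathrm{NonLow}$.

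I expect the construction of $(r,j)$ with this decoding property to be the main obstacle. It has to satisfy two conflicting demands: (a) $\seq{\tilde G^X_n}$ must be an honest tower for \emph{all} $X$, so the $\tilde G^X_n$ stay infinite and nested mod finite --- which in particular forbids making the sets $\phi_{j(\sigma)}$ for strings $\sigma$ of a fixed length pairwise almost disjoint, since every $\tilde G^X_{n+1}$ must be almost contained in every $\tilde G^X_n$; yet (b) the bits of $X$ must be encoded into the tower robustly enough that a single infinite computable pseudointersection $D$ pins down all of $X$, not merely the part of the levels that is common to all extensions. Reconciling the nesting forced by (a) with the splitting demanded by (b), and checking that the recovery in (b) is genuinely uniform and computable, is the technical heart of the argument.
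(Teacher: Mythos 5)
Your framework is, at the level of architecture, exactly the paper's proof: encode $Z'$ into the levels of the tower so that any infinite computable pseudointersection would decode $Z'$ as a $\DII$ set, contradicting non-lowness. The reduction of maximality to your ``decoding property,'' and the observation that each column is computable because it differs finitely from a set indexed by $Z'\uhr r(n)$, are both correct. But there is a genuine gap: the pair $(r,j)$, which you yourself flag as the technical heart, is never constructed, so the argument is incomplete as written. Moreover, your diagnosis of why this is hard rests on a misreading of your own constraint (a): nesting only requires $\phi_{j(\tau)}\sub^*\phi_{j(\sigma)}$ for \emph{comparable} strings $\sigma\prec\tau$; nothing forces $\tilde G^X_{n+1}\sub^*\tilde G^Y_n$ for $Y\neq X$. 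So there is no obstruction at all to making the sets $\phi_{j(\sigma)}$ for incomparable $\sigma$ of the same length pairwise disjoint---and that disjointness is precisely what makes decoding possible. The alleged conflict between (a) and (b) does not exist.

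Concretely, the gap is filled by the choice the paper makes. Identify each number with a binary string via its binary expansion, take $r(n)=n$, and let $j(\sigma)$ be a (uniformly obtained) index for the set of strings $x$ with $|x|\ge|\sigma|$ and $x\uhr|\sigma|=\sigma$. Then $\tilde G^X_n$ is the cylinder of strings of length at least $n$ extending $X\uhr n$: these sets are uniformly computable from the finite string $X\uhr n$, satisfy $\tilde G^X_0=\NN$, $\tilde G^X_{n+1}\sub\tilde G^X_n$, and $\tilde G^X_n\setminus\tilde G^X_{n+1}$ infinite. For decoding: if an infinite set $D$ satisfies $D\sub^*\tilde G^X_{k+1}$, then all but finitely many $x\in D$ with $|x|>k$ have $x(k)=X(k)$, so $X(k)=\lim_{x\in D,\,|x|>k}x(k)$; when $D$ is computable this is a computable approximation to $X$, whence $X\in\DII$ and $Z$ is low. (The paper streamlines all of this by defining $G_n=\{x: n\le|x|\land Z'_{|x|}\uhr n=x\uhr n\}$ directly, which amounts to taking your $g_x$ to be $Z'_{|x|}$ and absorbing $j$ into the definition.) With this $(r,j)$ supplied, your proof goes through and coincides with the paper's.
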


\begin{proof}
In the following,~$x$,~$y$, and~$z$ denote binary strings; we identify such a
string~$x$ with the number whose binary expansion is~$1x$. For example, the
string~$000$ is identified with 8, the number with binary representation
$1000$. Define a Turing functional~$\Theta$ for the Medvedev reduction as
follows: Set $\Theta^Z = G$, where for each~$n$,
\[
G_n= \{x \colon \, n \le s:= |x| \land Z'_s \uhr n= x \uhr n\}.
\]
Here~$Z'$ denotes the jump of~$Z$, which is computably enumerated relative
to~$Z$ in a standard way. Note that, for each~$n$, for sufficiently
large~$s$, the string $Z'_s \uhr n$ settles. So it is clear that for
each~$n$, we have $G_{n+1} \sub^* G_n$ and $G_n\setminus G_{n+1}$ is
infinite. Also~$G_n$ is computable.

Suppose now that~$R$ is an infinite set such that $R \sub^* G_n$ for each
$n$. Then for each~$k$,
\[
Z'(k)= \lim_{x \in G_k,|x| > k } x(k) = \lim_{x \in R,|x| > k } x(k),
\]
and hence $Z'\le_T R'$. So if $Z \in \mathrm{NonLow}$, then~$R$ cannot be
computable, and hence $\Theta^Z \in \+T$.
\end{proof}

\begin{remark}
The proof above yields a more general result. Suppose that~$\mathcal K$ is a
countable Turing ideal and~$\mathbb B$ is the Boolean algebra of sets with
degree in~$\mathcal K$. Then $\mathcal T_\mathbb B \le_s
\mathrm{NonLow}_\mathcal K$, where $\mathrm{NonLow}_\mathcal K : = \{ Z
\colon \fa R \in \mathcal K \, [Z' \not \le_T R']\}$.
\end{remark}

We next introduce a property of oracles that we call \emph{index
guessability}; it implies that an oracle does not compute a maximal tower. As
usual, let $\seq{\Phi_e}\sN e$ be an effective list of the Turing functionals
with one input, and write~$\varphi_e $ for $\Phi_e^\ES$. Note that if~$L$ is
a $\DII$-oracle, then~$\emptyset''$ can compute from~$e$ a characteristic
index for~$\Phi_e^L$ in case that the function~$\Phi_e^L$ is computable. To
be index guessable means that~$\Halt $ suffices.

\begin{definition}
We call an oracle~$L$ \emph{index guessable} if~$\Halt$ can compute from~$e$
an index for~$\Phi_e^L $ whenever~$\Phi_e^L$ is a computable function. In
other words, there is a functional~$\Gamma$ such that \bc $\Phi_e^L$ is
computable~$\RA$ $\Phi_e^L = \varphi_{\Gamma(\Halt;e)}$. \ec No assumption is
made on the convergence of $\Gamma(\Halt;e)$ in case~$\Phi_e^L$ is not a
computable function.
\end{definition}


Clearly, being index guessable is closed downward under~$\le_T$. A total
function is computable if and only if its graph is computable, in a uniform
way. So for index guessability of~$L$, it suffices that there is a Turing
functional~$\Gamma$ such that $\Gamma(\Halt;e)$ provides an index
for~$\Phi_e^L$ in case it is a computable $\{0,1\}$-valued function.

Every index guessable oracle~$D$ is low. To see this, for $i \in \NN$, let
$B_i= \{t\colon i\in D'_t\}$. If $i \in D'$ then~$B_i$ is cofinite, otherwise
$B_i= \emptyset$. There is a computable function~$g$ such that $\Phi_{g(i)}^D
$ is the characteristic function of~${B_i}$. To show that $D '\le_T \Halt$,
on input~$i$, let~$\emptyset'$ compute a computable index~$r(i)$ for~$B_i$.
Now use~$\emptyset' $ again to determine $\lim_k \varphi_{r(i)}(k)$, which
equals~$D'(i)$.

By Proposition~\ref{pr:halt}, an index guessable oracle~$D$ does not compute
a maximal tower. The following provides examples of such oracles.

\begin{thm}\label{thm:index_predictable}
If~$L$ is $\Delta^0_2$ and $1$-generic, then~$L$ is index guessable.
\end{thm}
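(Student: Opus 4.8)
The plan is to use the genericity of $L$ to show that whenever $\Phi_e^L$ is a total computable $\{0,1\}$-valued function, this fact is ``forced'' by some initial segment $\sigma \subset L$, in the strong sense that $\Phi_e^\sigma$ is already total (i.e., $\Phi_e^\tau$ converges on every input for every $\tau \supseteq \sigma$ compatible with $\sigma$, and all these values agree). Once we have such a $\sigma$, the function $\Phi_e^L$ equals $\Phi_e^\sigma$ viewed as a computable function, and a characteristic index for it can be computed from $\sigma$ and $e$. The remaining task is to have $\Halt$ find such a $\sigma$: since $L$ is $\Delta^0_2$, fix a computable approximation $\seq{L_s}$ with $L = \lim_s L_s$; then $\Halt$ can, for increasing lengths $\ell$, ask whether $\sigma = L \uhr \ell$ has the property ``$\Phi_e^\tau(x)\DA$ for all $\tau \supseteq \sigma$ and all $x \le |\tau|$, with consistent values'' — this is a $\Pi^0_1$ statement about $\sigma$, hence decidable by $\Halt$ — and also whether $L \uhr \ell$ has stabilized, which $\Halt$ can check using the computable approximation. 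When $\Halt$ finds an $\ell$ with $L\uhr \ell$ correct and forcing totality, it outputs the corresponding index.

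The key steps, in order, are: (1) Fix a computable enumeration of the ``totality'' requirement: for each $e$, let $S_e$ be the set of strings $\sigma$ such that $\Phi_e^\sigma$ forces a total function, meaning $(\forall x \le |\tau|)(\forall \tau \supseteq \sigma)\,\Phi_e^{\tau}(x)\DA$ and the values cohere into a single function; note $S_e$ is $\Pi^0_1$. (2) Let $D_e$ be the set of strings $\sigma$ such that either $\sigma \in S_e$, or $\sigma$ has an extension $\tau$ with $\Phi_e^\tau(x)\uparrow$-permanently-or-$\Phi_e^\tau(x)\downarrow$ disagreeing with an earlier value for some $x$ — more precisely, $\sigma$ has an extension on which $\Phi_e$ ``visibly fails to be the total computable function it appeared to be.'' Argue $D_e$ is dense: given any $\sigma$, if no extension forces a contradiction then we can extend $\sigma$ to land in $S_e$ (here genericity meets $S_e$). (3) Since $L$ is $1$-generic, $L$ meets or strongly avoids each $D_e$; by density $L$ meets $D_e$, so some $\sigma \subset L$ lies in $D_e$. (4) Now split into cases: if $\Phi_e^L$ is total computable, then the ``failure'' alternative in $D_e$ cannot be realized along $L$ (it would make $\Phi_e^L$ ill-defined or non-total), so the $\sigma\subset L$ witnessing $L\in D_e$ actually lies in $S_e$. (5) From such $\sigma$, compute an index: $\Phi_e^\sigma$ is a total computable function, uniformly in $(\sigma,e)$, so produce $\Gamma(\Halt;e)$ by having $\Halt$ search for the first $\ell$ with $L\uhr \ell \in S_e$ \emph{and} $L\uhr \ell$ permanently stable under the $\Delta^0_2$ approximation, then output the canonical index of $\Phi_e^{L\uhr\ell}$.

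The main obstacle I anticipate is step (4): making the case analysis airtight. The subtlety is that $1$-genericity only guarantees $L$ meets $D_e$ or has an initial segment with \emph{no} extension in $D_e$ — and I need to rule out the second alternative, or rather show that when $\Phi_e^L$ is computable the $\sigma$ we get is in $S_e$ rather than in the ``failure'' part of $D_e$. This requires setting up $D_e$ so that its failure component genuinely witnesses non-totality or inconsistency of $\Phi_e^L$ \emph{along $L$}, which forces a careful definition: the failure extension $\tau$ must be one that is an initial segment of the eventual $L$, not merely some incompatible string, so the dense set should really be phrased as ``$\sigma$ forces totality, or $\sigma$ itself already exhibits a stage where $\Phi_e$ diverges/disagrees.'' An alternative cleaner route, which I would likely adopt, is to bypass the failure component entirely: directly show $\{\sigma : \sigma \in S_e \text{ or } \Phi_e^\sigma \text{ is not consistent-partial-computable as seen so far}\}$ is dense and $L$ meets it, then observe that if $\Phi_e^L$ is computable the second disjunct is impossible along $L$. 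The other routine point to be careful about is that ``$\Phi_e^\sigma$ forces totality'' is genuinely $\Pi^0_1$ uniformly in $(\sigma, e)$, so $\Halt$ can decide it — this is standard (use the effective listing $\seq{\Phi_e}$), but worth stating explicitly so the reduction $\Gamma$ is visibly a Turing functional with oracle $\Halt$.
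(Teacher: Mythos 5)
Your strategy has a genuine gap at its core: the notion of an initial segment ``forcing totality'' of $\Phi_e$ is too strong, and such an initial segment need not exist even when $\Phi_e^L$ is a total computable function. Concretely, let $\Phi_e^X(x)$ search for the least $y>x$ with $X(y)=1$ and output $0$ when one is found. A $1$-generic $L$ is infinite, so $\Phi_e^L$ is the constant function $0$ --- total and computable. But no string $\sigma$ whatsoever lies in your $S_e$: the extension $\tau=\sigma 0^{|\sigma|+1}$ satisfies $\Phi_e^{\tau}(x)\uparrow$ for $x=|\sigma|\le|\tau|$. Moreover $\Phi_e$ never ``visibly fails'' (its values are consistent on every oracle, and it is total along $L$), so the failure component of $D_e$ is never realized along $L$ in a way that certifies non-totality of $\Phi_e^L$. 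Hence $D_e$ is not dense along $L$ (or, under your looser reading of the failure clause, $L$ meets it vacuously without this telling you anything), and both the density claim in step~(2) and the case analysis in step~(4) collapse. A second, independent problem: your $S_e$ is $\Pi^0_1$, and $1$-genericity gives meet-or-strongly-avoid only for $\Sigma^0_1$ sets of strings; so even if $D_e$ were dense along $L$, you could not conclude from $1$-genericity that $L$ meets it.

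The paper's proof sidesteps totality entirely by forcing \emph{non-splitting} instead. Let $S_e$ be the c.e.\ (hence $\Sigma^0_1$) set of strings $\sigma$ above which there exist $\tau_1,\tau_2\succ\sigma$ and $p$ with $\Phi_e^{\tau_1}(p)\ne\Phi_e^{\tau_2}(p)$. If splittings occurred densely along $L$, then, since $\Phi_e^L=F$ is computable, the c.e.\ set of strings $\tau$ with $\Phi_e^{\tau}(p)\ne F(p)$ for some $p$ would also be dense along $L$, and $1$-genericity would force $L$ to meet it, contradicting $\Phi_e^L=F$. So some $L\uhr k_e$ has no splitting above it; $\emptyset'$ finds $k_e$ and $L\uhr k_e$, and the index it outputs is: on input $p$, search for any $\tau\succeq L\uhr k_e$ with $\Phi_e^{\tau}(p)\downarrow$ and output that value. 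Totality of this procedure is inherited from totality of $\Phi_e^L$ (initial segments of $L$ supply the witnesses $\tau$), and correctness follows from non-splitting --- no string ever needs to force totality. If you want to salvage your outline, replace ``forces totality and consistency'' by ``admits no $\Phi_e$-splitting above it''; your $\Delta^0_2$/$\emptyset'$ bookkeeping at the end then goes through essentially unchanged.
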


\begin{proof}
Suppose that $F = \Phi_e^L$ and~$F$ is a computable set. Let~$S_e$ be the
c.e.\ set of strings~$\sss$ above which there is a $\Phi_e$-splitting in the
sense that
\[
S_e = \set{\sigma}{(\exists p)(\exists \tau_1\succ\sigma)
  (\exists \tau_2\succ\sigma) \, \Phi_e^{\tau_1}(p)\neq \Phi_e^{\tau_2}(p)}.
\]
Suppose that~$S_e$ is dense along~$L$. Then we claim that the set
\[
C_e = \set{\tau}{ (\exists p) \, \Phi_e^{\tau}(p)\neq F(p)}
\]
is also dense along~$L$, i.e., for every~$k$, there is some $\tau\succeq
L\uhr k$ such that $\tau\in C_e$. Indeed, let $\sigma\succeq L\uhr k$ be a
member of~$S_e$ and let~$p$,~$\tau_1$, and~$\tau_2$ witness this.
Let~$\tau_i$ for $i = 1$ or~$2$ be such that $ \Phi_e^{\tau_i}(p)\neq F(p)$.
Then $\tau_i\succeq L\uhr k$ is in~$C_e$. The set~$C_e$ is c.e.\ and
hence~$L$ meets~$C_e$, contradicting our assumption that $F = \Phi_e^L$.

It follows that~$S_e$ is not dense along~$L$. In other words, there is some
least~$k_e$ such that there is no splitting of~$\Phi_e$ above~$L\uhr k_e$. On
input~$e$, the oracle~$\emptyset'$ can compute~$k_e$ and $L\uhr k_e$. This
allows~$\emptyset '$ to find an index for~$F$, given by the following
procedure: To compute $F(p)$, find the least $\tau\succeq L\uhr k_e$ such
that $\Phi_e^{\tau}(p)\downarrow$ (in~$|\tau|$ many steps). Such a~$\tau$
exists because $\Phi_e^L(p)\downarrow$. By our choice of~$k_e$, it follows
that $\Phi_e^{\tau}(p) = \Phi_e^L(p) = F(p)$.
\end{proof}

We summarize the known implications: \bc 1-generic $\DII$~$\RA$ index
guessable~$\RA$ computes no maximal tower~$\RA$ low. \ec The last implication
cannot be reversed by Theorem~\ref{MAD ce} below; the others might. In
particular, we ask whether any oracle that computes no maximal tower is index
guessable. This would strengthen Theorem~\ref{thm: NonLow}. Note that the
following apparent weakening of index guessability of~$L$ still implies that
the oracle~$L$ computes no maximal tower: For each $S \le_T L$ such that
each~$S^{[n]}$ is computable, there is a functional~$\Gamma$ such that $
\varphi_{\Gamma(\Halt;n)}$ is the characteristic function of $S^{[n]}$. To
see this, assume~$S$ is a maximal tower~$G$. Such an~$S$ contradicts
Proposition~\ref{pr:halt}.


%

\begin{aside}
We pause briefly to mention a potential connection of our topic to
computational learning theory. One says that a class~$S$ of computable
functions is \emph{EX-learnable}
if there is a total Turing machine~$M$ such that $\lim_s M(f\uhr s)$ exists
for each $f\in S$ and is an index for~$f$. For an oracle~$A$, one says
that~$S$ is \emph{EX$[A]$-learnable} if there is an oracle machine~$M$ that
is total for each oracle and such that $\lim_s M^A(f\uhr s)$ exists for each
$f\in S$ and is an index for~$f$. One calls an oracle~$A$ \emph{EX-trivial}
if EX = EX$[A]$. Slaman and Solovay~\cite{Slaman.Solovay:91} showed that~$A$
is EX-trivial if and only if~$A$ is~$\DII$ and has 1-generic degree. This
used an earlier result of Haught~\cite{Haught:86}
that the Turing degrees of the 1-generic $\DII$-sets are closed downward.
\end{aside}

\subsection{Ultrafilter bases and highness}


Let $\text{Tot} =\set{e}{\phi_e\text{ is total}}$. Let $\mathrm{DomFcn}$
denote the mass problem of functions~$h$ that dominate every computable
function and also satisfy $h(s) \ge s$ for all~$s$. Note that a set~$F$ is
high if and only if $\text{Tot} \le_T F'$. To represent highness by a mass
problem in the Medvedev degrees, one can equivalently choose the set of
functions dominating each computable function, or the set of approximations
to $\text{Tot}$, i.e., the $\{0,1\}$-valued binary functions~$f$ such that
$\lim_s f(e,s) = \text{Tot}(e)$. This follows from the next fact; we omit the
standard proof.

\begin{fact} \label{fact:sty}
$\mathrm{DomFcn}$ is Medvedev equivalent to the mass problem of
approximations to \emph{$\text{Tot}= \set{e}{\phi_e\text{ is total}}$}.
\end{fact}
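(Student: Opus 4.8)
The plan is to establish the two Medvedev reductions separately, each via a single oracle-independent Turing functional. Write $\mathcal{A}$ for the mass problem of approximations to $\text{Tot}$, i.e.\ the binary $\{0,1\}$-valued $f$ with $\lim_s f(e,s)=\text{Tot}(e)$.

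\emph{First I would show $\mathcal{A}\le_s \mathrm{DomFcn}$.} Given $h\in\mathrm{DomFcn}$, define $f(e,s)=1$ iff $\phi_{e,h(s)}(x)\downarrow$ for all $x\le s$, and $f(e,s)=0$ otherwise; this is computed from $h$ by a fixed functional. If $\phi_e$ is total, then $g_e(s):=\mu t\,[\forall x\le s\;\phi_{e,t}(x)\downarrow]$ is a total computable function, so $h$ dominates $g_e$, and hence $f(e,s)=1$ for all sufficiently large $s$. If $\phi_e$ is not total, say $\phi_e(x_0)\uparrow$, then $f(e,s)=0$ for every $s\ge x_0$. Either way $\lim_s f(e,s)=\text{Tot}(e)$, so $f\in\mathcal A$.

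\emph{Next I would show $\mathrm{DomFcn}\le_s\mathcal A$.} The point is that an approximation $f$ to $\text{Tot}$ does not decide totality outright, so $h$ must be built by dovetailing. Given $f\in\mathcal A$, to compute $h(n)$, for each $e\le n$ run two searches in parallel: search (i) for a stage $s$ with $\phi_{e,s}(x)\downarrow$ for all $x\le n$, and search (ii) for an $m\ge n$ with $f(e,m)=0$. For each $e\le n$ at least one search halts: (i) halts whenever $\phi_e$ is total, and (ii) halts whenever $\phi_e$ is not total, since then $\lim_m f(e,m)=0$. Hence the computation of $h(n)$ terminates; set $h(n)=n$ plus the maximum of $\phi_e(x)+1$ over all $e,x\le n$ for which search (i) halted. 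Then $h(n)\ge n$ always. If $\phi_e$ is total, fix $N_e$ with $f(e,m)=1$ for all $m\ge N_e$; for every $n\ge\max(e,N_e)$ search (ii) cannot halt, so search (i) contributes and $h(n)\ge\phi_e(n)+1>\phi_e(n)$. Thus $h$ dominates every total computable function, so $h\in\mathrm{DomFcn}$, and again the reduction is a fixed functional.

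The routine part is checking that the constructed object lands in the target mass problem; the one step that genuinely needs care is the termination of the computation of $h(n)$ in the second reduction, and in particular phrasing search (ii) as a search over $m\ge n$ rather than over the whole column $f(e,\cdot)$. This is exactly what guarantees that, for a total $\phi_e$ and all large $n$, only search (i) can halt, so the values of $\phi_e$ are eventually absorbed into $h$ — the rest is bookkeeping.
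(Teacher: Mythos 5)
Your proof is correct, and it is precisely the ``standard proof'' that the paper explicitly omits for this fact: one direction by speeding up the enumeration of $\phi_e$ using the dominating function, the other by dovetailing the convergence search against the search for a $0$ in the tail of the approximation's $e$-th column. The key point you flag --- restricting search (ii) to $m\ge n$ so that for total $\phi_e$ and large $n$ only search (i) can terminate --- is exactly the step that makes the second reduction work.
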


We show that exactly the high oracles compute ultrafilter bases, and that the
reductions are uniform. By Fact~\ref{fact:sty}, it suffices to show that $\+U
\equiv_s \mathrm{DomFcn}$. We will obtain the two Medvedev reductions through
separate theorems, with proofs that are unrelated.

\begin{thm}\label{thm:UFB->high}
Every ultrafilter base uniformly computes a dominating function. In other
words, $\+U \ge_s \mathrm{DomFcn}$.
\end{thm}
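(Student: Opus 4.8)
The plan is to start from an ultrafilter base $F = \seq{F_n}\sN n$ and an associated function $p$ in the sense of Definition~\ref{df:assoc_fcn}, which $F$ uniformly computes by Fact~\ref{fa:assoc fcn}; recall also that $p(n) \in \bigcap_{i \le n} F_i$ and $p$ is increasing. By Proposition~\ref{prop:CDno} (with $\+K$ the Turing ideal of computable sets), $p$ is not dominated by any computable function; but to prove highness we need a function that dominates \emph{every} computable function, so $p$ alone is not enough. The idea is to use the ultrafilter property of $F$ to ``boost'' $p$ into a genuinely dominating function: for each computable function $g$, the ultrafilter splitting condition applied to a suitably chosen computable set $R_g$ coded from $g$ must put some $F_n$ almost inside $R_g$ (rather than inside $\ol{R_g}$), and this will force $p$ to grow past $g$ on a tail.

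Concretely, here is the construction I would carry out. Fix an effective enumeration $\seq{g_e}\sN e$ — or rather, since we only have partial computable functions, work with the functions $\phi_e$ and handle nontotality gracefully. For each $e$, define a computable set $R_e$ as a union of intervals: roughly, $R_e$ contains the interval $[\ell_k, \ell_{k+1})$ for even $k$ and omits it for odd $k$, where $\ell_0 = 0$ and $\ell_{k+1}$ is chosen large enough (depending on $\phi_e$, computed for as many steps as the current stage allows) that any element of $\bigcap_{i\le k}F_i$ lying beyond a certain point would be ``pinned'' by the value of $\phi_e$. The key combinatorial point, exactly as in the proof of Proposition~\ref{prop:CDno}, is that if $\phi_e$ is total and the intervals are laid out using $\phi_e$, then for every $n$ we would have $F_n \not\sub^* R_e$ and $F_n\not\sub^* \ol{R_e}$ — contradicting the ultrafilter base property — \emph{unless} the interval lengths grow fast enough to outpace $\phi_e$. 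So the ultrafilter property forces, for each total $\phi_e$, some $F_{n_e} \sub^* R_e$ (or $\ol{R_e}$), and by reading off which alternative holds we can extract from $F$ a bound on $\phi_e$. Then the function $h(s) = $ (an increasing modification of) $\max$ over $e \le s$ of these extracted bounds, together with $p$, dominates every $\phi_e$ and satisfies $h(s) \ge s$.

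The main obstacle I anticipate is making the reduction genuinely uniform and total as a Turing functional $\Theta$ with $\Theta^F \in \mathrm{DomFcn}$, while the sets $R_e$ depend on $\phi_e$ which may be partial. The trick will be to interleave the computation: at stage $s$, when defining the next threshold $\ell_{k+1}$ for $R_e$, only run $\phi_e$ for $s$ steps, and if it has not converged on the relevant arguments, default to $\ell_{k+1} = \ell_k + 1$; this keeps $R_e$ computable (indeed computable uniformly in $e$) and harmless when $\phi_e$ is partial, since then we impose no obligation. One must then check that when $\phi_e$ \emph{is} total, the eventual behavior of $R_e$ still has the ``no interval length outpaces $\phi_e$ forces a contradiction'' property on a tail, so that the ultrafilter base condition still does the required work; this is where the hypothesis $\langle x,n\rangle \ge x,n$ and the increasing nature of $p$ help keep the bookkeeping clean. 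A secondary, more routine point is to confirm that the resulting $h$ (combined with $p$ via $s \mapsto \max(h(s), p(s), s)$) lies in $\mathrm{DomFcn}$ as defined, i.e., dominates every computable function and is everywhere $\ge s$; this follows once domination of each $\phi_e$ on a tail is established.
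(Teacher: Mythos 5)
Your starting point is right: by Proposition~\ref{prop:CDno}, an associated function $p$ merely escapes every computable function, so a genuine ``boost'' via the splitting property is needed. But the boost as you describe it has two gaps. First, knowing that $F_{n}\setminus[0,N]\sub \ol{R_e}$ does \emph{not} ``force $p$ to grow past $\phi_e$ on a tail,'' and ``reading off which alternative holds'' does not yield a bound computably in $F$. For the first point: if $R_e=\bigcup_i[\ell_{2i},\ell_{2i+1})$ with $\ell_{k+1}=\phi_e(\ell_k)+1$ and $\phi_e(m)=2^m$, then $p$ is only constrained to land in the long odd intervals $[\ell_{2i+1},\ell_{2i+2})$, inside which it may crawl; $p(k)=k+c$ for long stretches is consistent with $F_n\sub^*\ol{R_e}$, and such a $p$ does not dominate $2^k$. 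A bound on $\phi_e(x)$ must instead be read off from the next breakpoint beyond the point where $F_n$ reappears. For the second point: the witnesses $n$, the threshold $N$, and the side are only limit-computable from $F$ (the statement ``$F_n\sub^* R_e$'' is $\Sigma^0_2(F)$), so the functional $\Theta$ cannot simply read them off; and even granting them, locating the breakpoints of $R_e$ beyond a given $x$ requires an unbounded search that diverges when $\phi_e$ is partial.

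Second, your interleaving fix for partiality --- run $\phi_e$ for $s$ steps and default to $\ell_{k+1}=\ell_k+1$ --- breaks the construction even for total $\phi_e$: if the convergence time of $\phi_e(m)$ grows faster than the stage at which you define the threshold near $m$, then \emph{every} interval is a default interval and the splitting property applied to $R_e$ says nothing about $\phi_e$. Both gaps stem from the same obstruction: $\mathrm{Tot}$ is properly $\Sigma^0_2$, so no $F$-computable truncation of the search for $\phi_e$'s values can distinguish ``slow'' from ``divergent.'' The paper confronts this with Jockusch's device (Lemma~\ref{lem:sequence PI01}): uniformly computable nonempty $\Pi^0_1$-classes $P_e$ that are singletons (hence computable sets) when $\phi_e$ is total and consist only of bi-immune sets otherwise. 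This makes ``$\phi_e$ is total iff some $F_i\setminus[0,n]$ is contained in a member of $P_e$ or in a complement of one'' an honest equivalence that is uniformly $\Sigma^0_2(F)$; since $\mathrm{Tot}$ is also $\Pi^0_2$, it is $\Delta^0_2(F)$ via fixed indices, so $F$ uniformly computes an \emph{approximation} to $\mathrm{Tot}$, and Fact~\ref{fact:sty} converts that approximation into a dominating function. If you want to keep your interval sets $R_e$, you must at least (i) fold the convergence time of $\phi_e$ on $[0,\ell_k]$ into the definition of $\ell_{k+1}$ (waiting, possibly forever, still leaves $R_e$ computable, because a breakpoint defined at step $s$ is $\ge s$), and (ii) route the extraction through a limit computation or a maximum over all guesses $(n,N)$, checking totality of each guess's computation; this is considerably more delicate than the proposal suggests.
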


Our proof is directly inspired by a proof of Jockusch~\cite[Theorem~1,
(iv)$\implies$(i)]{Jockusch:72*1}, who showed that any family of sets
containing exactly the computable sets must have high degree.

\begin{lemma} \label{lem:sequence PI01}
There is a uniformly computable sequence $P_0, P_1,\dots$\ of non\-empty
$\Pi^0_1$-classes such that for every~$e$,
\begin{itemize}
\item if~$\phi_e$ is total, then~$P_e$ contains a single element, and
\item if~$\phi_e$ is not total, then~$P_e$ contains only bi-immune elements.
\end{itemize}
\end{lemma}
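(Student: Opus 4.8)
The goal is to construct, uniformly in $e$, a nonempty $\Pi^0_1$-class $P_e$ so that $P_e$ is a singleton when $\phi_e$ is total and consists entirely of bi-immune reals when $\phi_e$ is not total. The natural strategy is to run a single construction that ``watches'' the computation of $\phi_e$ and behaves in two regimes. While $\phi_e$ appears total (its values on $0,1,\dots,n$ have all converged by some stage), the class should be funneling toward one designated path; the moment it looks like $\phi_e$ might fail to be total, the portion of the tree built from that point on should be a tree of bi-immune sets. The key observation making this possible is that the collection of trees all of whose paths are bi-immune is itself effectively presentable: there is a uniformly computable sequence of nonempty $\Pi^0_1$-classes $B_0 \supseteq B_1 \supseteq \cdots$ (or a single such class) each of whose members is bi-immune, obtained, e.g., by the standard hyperimmune-free basis-style diagonalization against all pairs consisting of a c.e.\ set $W_i$ and the requirement ``$W_i$ is infinite and $W_i \subseteq A$ or $W_i \subseteq \overline A$.'' One simply builds a computable tree $T$ such that at level corresponding to the $i$-th requirement, both the ``$0$'' and ``$1$'' extensions are split off so that neither branch can be swallowed by $W_i$; this is the textbook construction of a nonempty $\Pi^0_1$-class of bi-immune sets, and it relativizes/parametrizes with no trouble.

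\textbf{The construction of $P_e$.} I would define a computable tree $T_e \subseteq 2^{<\omega}$ as follows. Dovetail the enumeration of $\phi_e$. Process levels of the tree in order. As long as, by the stage we are examining level $\ell$, the values $\phi_e(0),\dots,\phi_e(\ell)$ have all converged, keep the tree ``thin'': at each such level admit only the single extension dictated by some fixed computable reference path (say, the all-zero extension), so that the subtree looks like a single branch. As soon as we reach a level $\ell$ at which $\phi_e(\ell)$ has not yet converged at the current stage, freeze a node $\sigma$ and graft onto $T_e$ above $\sigma$ the full bi-immunity tree $T$ described above (appropriately reindexed so its requirements start fresh above $\sigma$). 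If later $\phi_e(\ell)$ does converge, we do not retract anything — the grafted bi-immunity subtree stays — but we may resume: above the designated reference leaf we can again go thin until the next apparent gap, grafting another bi-immunity subtree there, and so on. Let $P_e = [T_e]$, the set of infinite branches.

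\textbf{Verification.} If $\phi_e$ is total, then every value converges, so only finitely many ``gaps'' are ever seen before they close, hence only finitely many bi-immunity subtrees are ever grafted — wait, this needs care: if there are infinitely many stages at which some as-yet-unconverged value is seen, we could graft infinitely often. The cleaner route is: when $\phi_e$ is total, arrange that the reference path is the unique path that is eventually always ``thin,'' and that each grafted bi-immunity subtree, being a nonempty $\Pi^0_1$-class, still has branches — so $P_e$ would not be a singleton. To fix this, I would instead make the grafting tentative: above $\sigma$ put a bi-immunity tree, but keep extending the reference path as well, and when $\phi_e(\ell)$ converges, kill the grafted subtree by declaring all its nodes non-extendible in $T_e$ (i.e., stop extending them) — legitimate since $T_e$ need only be computable, not have a computable set of leaves. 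Then if $\phi_e$ is total, every grafted subtree is eventually killed, leaving exactly the reference path, so $P_e$ is a singleton; and $P_e$ is nonempty because the reference path always survives. If $\phi_e$ is not total, let $\ell$ be least with $\phi_e(\ell)\uparrow$; then the gap at level $\ell$ is never closed, so the bi-immunity subtree grafted there is never killed, and moreover from that point on the reference path is never extended past that level (we stall waiting for $\phi_e(\ell)$), so every branch of $P_e$ eventually enters and stays in a bi-immunity subtree, hence is bi-immune. Nonemptiness of $P_e$ in this case follows from nonemptiness of the grafted $\Pi^0_1$-class.

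\textbf{Main obstacle.} The delicate point is exactly the bookkeeping just sketched: ensuring that in the total case $P_e$ collapses to a \emph{single} element (not merely to a class with a distinguished computable path), while in the non-total case \emph{every} path is bi-immune and $P_e$ is still nonempty. This forces the ``kill the grafted subtree when the gap closes'' mechanism, and one must check that killing subtrees keeps $T_e$ a well-defined computable subtree of $2^{<\omega}$ with $[T_e]\neq\emptyset$ and does not accidentally kill the reference path. Everything else — uniformity in $e$ (the whole recipe is a single computable procedure with $e$ as parameter), and the correctness of the underlying bi-immunity tree — is routine.
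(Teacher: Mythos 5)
Your construction is in essence the paper's, implemented with trees rather than nested clopen sets. The paper also runs a two-mode construction driven by the convergence of $\phi_e$: each time a new value of $\phi_e$ converges, the class built so far is collapsed to a single cylinder $[\sigma]$ with $|\sigma|>s$, and a fresh nonempty $\Pi^0_1$-class consisting of bi-immune elements (concretely, of Martin-L\"of randoms, which are automatically bi-immune) is restarted inside $[\sigma]$. If $\phi_e$ is total there are infinitely many collapses to cylinders of unbounded length, forcing a singleton; if not, a final random class runs uninterrupted. The collapse-to-a-cylinder device does the work of both your reference path and your ``kill the grafted subtree'' mechanism simultaneously, which is why the paper needs neither piece of bookkeeping. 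Your appeal to a uniformly available nonempty $\Pi^0_1$-class of bi-immune sets (inside any cylinder) is sound; the paper realizes it via randomness, and one should note that bi-immunity is preserved under prepending the finite string $\sigma$.

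One point in your write-up must be resolved, because on a literal reading it breaks the non-total case. In your ``fix'' you say that during a gap you ``keep extending the reference path as well'' and kill the grafted subtree only when the gap closes. If the reference path really is extended while waiting, then when $\phi_e(\ell)\uparrow$ the computable reference path is an infinite branch of $T_e$, hence a non-bi-immune member of $P_e$ --- exactly what the lemma forbids. Your verification paragraph silently switches to the correct behaviour (``the reference path is never extended past that level; we stall waiting''), and with that stalling convention the argument does go through: when $\phi_e$ is total every gap closes, so the reference path is extended cofinally while every graft dies, giving a singleton; when $\phi_e$ is not total the reference path is finite and $[T_e]$ is exactly the set of paths through the last, surviving graft. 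You should also route the reference path and each graft through different immediate successors of the frozen node, so that killing a graft cannot remove nodes of the reference path; with these repairs the construction is correct.
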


\begin{proof}
Note that each Martin-L\"of (or even Kurtz) random set is bi-immune: For an
infinite computable set~$R$, the class of sets containing~$R$ is a
$\PI1$-null class and hence determines a Kurtz test. A similar fact holds for
the class of sets disjoint from~$R$.

For each~$s$, let~$n_s$ be the largest number such that~$\phi_{e,s}$
converges on $[0,n_s)$. We build the $\Pi^0_1$-class~$P_e$ in stages, where
$P_{e,s}$ is the nonempty clopen set we have before stage~$s$ of the
construction. Let $P_{e,0}=2^\omega$.

\emph{Stage~$0$.} Start constructing~$P_e$ as a nonempty $\Pi^0_1$-class
containing only Martin-L\"of random elements.

\emph{Stage~$s$.} If $n_s = n_{s-1}$, continue the construction that is
currently underway, which will produce a nonempty $\Pi^0_1$-class of random
elements.

On the other hand, if $n_s > n_{s-1}$, fix a string~$\sigma$ such that
$[\sigma]\subseteq P_{e,s}$ and~$|\sigma|>s$. Let $P_{e,s+1} = [\sigma]$. End
the construction that we have been following and start a new construction for
$P_e$, starting at stage $s+1$, as a nonempty $\Pi^0_1$-subclass of
$[\sigma]$ containing only Martin-L\"of random elements.

It is clear that if~$\phi_e$ is total, then~$P_e$ will be a singleton.
Otherwise, there will be a final construction of a nonempty $\Pi^0_1$-class
of randoms which will run without further interruption.
\end{proof}

Of course, when~$P_e$ is a singleton, its lone element must be computable.

\begin{proof}[Proof of Theorem~\ref{thm:UFB->high}]
For any set~$C$, let $S_C = \set{X\in 2^\omega}{C\subseteq X}$. Note that
if~$C$ is computable (or even merely c.e.), then~$S_C$ is a $\Pi^0_1$-class.
Let $Q_e = \set{X}{\ol{X}\in P_e}$ be the $\Pi^0_1$-class of complements of
elements of~$P_e$.

Now let~$F$ be an ultrafilter base. We have that
\begin{align*}
\phi_e\text{ is total} \iff& (\exists i)(\exists n)\;
       [F_i\setminus [0,n] \text{ is a subset of some } \\
	& \qquad\qquad X\in P_e\text{ or its complement}] \\
	\iff& (\exists i)(\exists n)\; [P_e\cap S_{F_i\setminus [0,n]}
    \neq\emptyset \text{ or }Q_e\cap S_{F_i \setminus [0,n]}\neq\emptyset].
\end{align*}
Even though $S_{F_i \setminus [0,n]}$ is a $\PPI$-class, we cannot hope to
compute an index using~$F$. However, $S_{F_i \setminus [0,n]}$ is a
$\PPI[F]$-class uniformly in $i,n$. Using the fact that the nonemptiness of a
$\Pi^0_1[F]$-class is a $\Pi^0_1[F]$-property, we see that $\text{Tot}
=\set{e}{\phi_e\text{ is total}}$ is $\Sigma^0_2[F]$.
Note that the $\Sigma^0_2$-index does not depend on~$F$. Since Tot is
also~$\Pi^0_2$, it is $\Delta^0_2[F]$ via a fixed pair of indices, and hence
Turing reducible to~$F'$ via a fixed reduction. One direction of the usual
proof of the (relativized) Limit Lemma now shows that we can uniformly
compute an approximation to $\text{Tot}$ from~$F$. Hence, from~$F$ we can
uniformly compute a dominating function by Fact~\ref{fact:sty}.
\end{proof}

%
%
%

\begin{thm}\label{thm: High}
Every dominating function uniformly computes an ultrafilter base. In other
words, $\+U \le_s \mathrm{DomFcn}$.
\end{thm}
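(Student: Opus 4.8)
The plan is to build, uniformly from a dominating function $h$ (with $h(s) \ge s$), a tower $F = \seq{F_n}$ of computable sets that decides every computable set up to $\sub^*$. I will enumerate all computable sets by working with the partial functions $\varphi_e$: let $R_e$ be "the $e$-th potential computable set", and the key point is that $h$ lets me recognize whether $\varphi_e$ is total and, if so, lets me compute $R_e = \varphi_e$ effectively with a computable modulus. Specifically, given $F_n$ already constructed (an infinite computable set), I want $F_{n+1}$ to be an infinite computable subset of $F_n$ with $F_n \setminus F_{n+1}$ infinite, chosen so that $F_{n+1}$ is almost contained in $R_e$ or in $\overline{R_e}$ whenever $R_e$ turns out to be a genuine infinite computable set — and I must handle the $n$-th such requirement at stage $n$ (after arranging a bookkeeping so that every $e$ is eventually treated).

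The core construction step: at stage $n$, consider requirement $e = e(n)$. Using $h$, check whether $\varphi_e$ "looks total within the $h$-bound", i.e., for each $x$ whether $\varphi_{e}(x)\hspace{-2pt}\downarrow$ in at most $h(x)$ steps; since $h$ dominates the (computable, when total) modulus of convergence of $\varphi_e$, this gives the correct answer for cofinitely many $x$ when $\varphi_e$ is total, and I can even decide totality: $\varphi_e$ is total iff for all $x$, $\varphi_{e,h(x)}(x)\hspace{-2pt}\downarrow$ — wait, that "for all $x$" is only $\Pi^0_1$ in $h$, but I only need a $\Delta^0_2(h)$, hence $h'$-, decision; and in fact since $h$ computes an approximation to $\mathrm{Tot}$ by Fact~\ref{fact:sty}, I may as well assume $h$ directly gives me $\mathrm{Tot}$ in the limit. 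So: from $h$ I obtain $\mathrm{Tot}$ as a $\Delta^0_2(h)$ set and, for $e \in \mathrm{Tot}$, the function $\varphi_e$ with a computable-from-$h$ modulus. Now look at $G_n := F_n$ (infinite, computable). Its complement within... rather: split $F_n$ according to $R_e$. One of $F_n \cap R_e$, $F_n \cap \overline{R_e}$ is infinite (possibly both). If only one is infinite, that part is $\sub^* $-correct already; pick $F_{n+1}$ to be an infinite, co-infinite-in-$F_n$ computable subset of the infinite part(s) arranged so that we also keep infinitely much of $F_n$ outside $F_{n+1}$. If both $F_n\cap R_e$ and $F_n\cap\overline{R_e}$ are infinite, I must commit to one side; take $F_{n+1}$ to be an infinite co-infinite computable subset of $F_n \cap R_e$ (say). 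If $e \notin \mathrm{Tot}$, just set $F_{n+1}$ to be $F_n$ with every other element of $F_n$ removed, keeping it infinite and co-infinite in $F_n$. Throughout, "infinite computable subset, co-infinite in $F_n$" is easy since $F_n$ is infinite and computable. The whole construction is a $\Delta^0_2(h)$, hence $h'$-computable, process producing the sets $F_n$; but I need $F$ itself, the set $\{\langle x,n\rangle : x \in F_n\}$, to be computable-from-$h$, not merely from $h'$.

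The main obstacle is precisely this: the naive construction decides totality using $h'$, so the sequence $\seq{F_n}$ comes out only $\Delta^0_3$-ish, whereas Medvedev reduction demands $F \le_T h$. The fix is the same device used implicitly in Jockusch-style arguments and in the companion Theorem~\ref{thm:UFB->high}: instead of deciding "$\varphi_e$ total" outright, I use a $\Pi^0_1$-class / guessing trick so that the construction of $F_{n+1}$ depends on $h$ only, with the understanding that it is eventually correct. Concretely, I interleave: at stage $n$ I have a current guess about finitely many requirements; when $h$ reveals $\varphi_{e,h(x)}(x)\hspace{-2pt}\uparrow$ for some $x$ I had not yet seen, I may "cancel" requirement $e$ and restart its action further out in $F_n$, exactly mirroring the restart mechanism in Lemma~\ref{lem:sequence PI01}. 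Because $h$ dominates, each requirement is restarted only finitely often, so each $F_n$ stabilizes and the limiting sequence is a genuine tower; and because the whole dovetailed process queries only $h$ and otherwise runs computably, $F \le_T h$ uniformly. Verification then has two parts: (i) each $F_n$ is computable, $F_0 = \NN$, $F_{n+1}\sub^* F_n$, $F_n\setminus F_{n+1}$ infinite — immediate from the construction; (ii) for each computable $R$, picking $e$ with $R = \varphi_e$ (so $e \in \mathrm{Tot}$), the stage handling $e$ eventually acts for the last time and forces $F_{n+1}\sub^* R$ or $F_{n+1}\sub^*\overline R$, whence $F \in \+U$. The bookkeeping that each requirement is assigned to cofinitely... rather, to some level $n$ and restarted only finitely often is the one place to be careful, but it is routine given domination.
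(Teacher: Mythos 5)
Your plan follows the same overall strategy as the paper's proof of Theorem~\ref{thm: High}: list all pairs (computable set, complement) via $\{0,1\}$-valued partial computable functions, use the dominating function to limit-guess both totality and the correct side, and absorb the finitely many wrong guesses into $=^*$. But the one step you defer --- ``I interleave\dots exactly mirroring the restart mechanism in Lemma~\ref{lem:sequence PI01}'' --- is where essentially all of the content lies, and as described it does not yet work. First, a ``cancel requirement $e$ and redo $F_{n+1}$'' scheme by itself only yields $F\le_T h'$; what makes $F\le_T h$ is the specific device of enumerating each $F_n$ as a nondecreasing unbounded sequence and never retracting, so that a revised guess merely redirects future elements while the old ones become finite garbage. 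You gesture at ``restarting further out'' but never state this monotonicity, and Lemma~\ref{lem:sequence PI01} (which rebuilds $\Pi^0_1$ classes using no oracle) is not the right model for it. Second, your construction has two separate limit-guessing problems, not one: besides totality of $\varphi_e$, you must guess which of $F_n\cap R_e$ and $F_n\cap\ol{R_e}$ is infinite, and your case split (``if only one is infinite\dots if both are infinite\dots'') is not something the construction can evaluate from $h$. Both guesses have to be folded into a single uniform rule --- prefer the leftmost candidate that is currently growing fast enough relative to $h$ --- and the stabilization argument then rests on $h$ dominating the computable function $s\mapsto$ (least $t$ such that the true candidate has $s$ elements below $t$).

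There is also a structural circularity in your version that the paper avoids: your level $n+1$ works inside the $h$-dependent, still-changing approximation to $F_n$, so the ``computable speed function'' you need $h$ to dominate is only computable after you have proved that $F_n$ stabilizes; this forces an induction on levels interleaved with the injury argument. The paper sidesteps this by defining all candidate sets $S_\alpha$, $\alpha\in\{0,1,2\}^{<\omega}$, purely computably in advance (independently of $h$), with the third outcome playing the role of your ``$e\notin\mathrm{Tot}$, remove every other element'' case, and using $h$ only to select the leftmost $\alpha$ at each level. Finally, a small but genuine error: ``$\varphi_e$ is total iff for all $x$, $\varphi_{e,h(x)}(x)\downarrow$'' is false as a mathematical statement, since domination gives convergence within $h(x)$ steps only for cofinitely many $x$; the correct statement is the cofinite version, and it is the reason the guessing is only limit-correct in the first place.
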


\begin{proof}
Let $\seq{\psi_e}\sN e$ be an effective listing of the $\{0,1\}$-valued
partial computable functions defined on an initial segment of~$\NN$. Let
$V_{e,k} =\{x\colon \psi_e(x)=k\}$ so that $\seq{(V_{e,0}, V_{e,1})}$ is an
effective listing that contains all pairs of computable sets and their
complements.

Let $T=\{0,1,2\}^{< \omega}$.
Uniformly in $\alpha \in T$, we will define a set~$S_\aaa$.
%
%
We first explain the basic idea and then modify it to make it work. The basic
idea is to
start with $S_{\emptyset} = \omega$ and build $S_{\aaa\ape k} = S_\aaa \cap
V_{e,k}$ for $k= 0,1$ and $e = |\alpha|$,
that is, we split~$S_\aaa$ according to the listing above. We
then consider the leftmost path~$g$
such that $S_{g\uhr e}$ is infinite for each~$e$. A~dominating
function~$h$ can eventually discover each initial segment of this path, and
use this to compute a set~$F$ such that $F_e = ^*S_{g\uhr e}$ for each~$e$.

The problem is that both $S_\aaa \cap V_{e,0}$ and $S_\aaa \cap V_{e,1}$
could be finite (because~$e$ is not a proper index of a computable set). In
this case we still need to make sure that $F_e \setminus F_{e+1}$ is
infinite. So the rightmost option at level~$n$ is a set $S_{\aaa \ape 2 }=
\wt S_\aaa$ that simply removes every other element from~$S_\aaa$ (so as to
obtain an infinite coinfinite subset).
The sets $S_{\aaa\ape k}$ for $k \le 1$ will be subsets of~$\wt S_{\aaa}$.

We now provide the details. The set~$S_\aaa$ is enumerated in increasing
fashion, and possibly finite. So each~$S_\aaa$ is computable, though not
uniformly in~$\aaa$. All the sets and functions defined below can be
interpreted at stages.

Let $S_{\ES,s}= [0,s)$. If we have defined (at stage~$s$) the set $S_\aaa =
\{r_0< \cdots <r_k\}$, let $\wt S_\aaa$ contain the numbers of the form
$r_{2i}$. Let $S_{\aaa \ape 2 } = \wt S_\aaa$.
Let $S_{\aaa\ape k} = \wt S_\aaa \cap V_{e,k}$ for $k= 0,1$, $e = |\alpha|$.
We define a uniform list of Turing functionals~$\Gamma_e$ so that the
sequence $\seq {\Gamma^h_e(t)}\sN t$ is nondecreasing and unbounded, for
each~$e$ and each oracle function~$h$ such that $h(s) \ge s$ for each~$s$. We
will let $F_e = \{\Gamma^h_e(t)\colon t \in \NN\}$.
\medskip

\n \emph{Definition of\/~$\Gamma_e$.} Given an oracle function~$h$, we will
write~$a_s$ for $\Gamma^h_e(s)$. Let $a_0=0$. Suppose $s>0$ and~$a_{s-1}$ has
been defined. Check if there is $\aaa \in T$ of length~$e$ such that
$|S_{\aaa, h(s)}|\ge s$. If there is no such~$\aaa$, let $a_s=a_{s-1}$.
Otherwise, let~$\aaa$ be leftmost such. If $\max S_{\aaa, h(s)} > a_{s-1}$,
let $a_s= \max S_{\aaa, h(s)} $. Otherwise, again let $a_s=a_{s-1}$.

Note that the sequence $\{a_s\}_{s<\omega}$ is unbounded because for the
rightmost string $\alpha \in T$ of length~$e$ (i.e., the string consisting
only of~$2$'s), the set~$S_{\alpha,t}$ consists of the numbers in $[0,t)$
divisible by~$\tp{e}$. We may combine the functionals~$\Gamma_e$ to obtain a
functional~$\Psi$ such that $(\Psi^h)_e = F_e$ for each~$h$ with $h(s) \ge s$
for each~$s$.

\begin{claim}
If $h\in \mathrm{DomFcn}$, then $F=\Psi^h \in \+U$.
\end{claim}
\noindent To verify this, let $g \in \tp \omega$ denote the leftmost path in
$\{0,1,2\}^\omega$ such that the set $S_{g \uhr e}$ is infinite for
every~$e$. Note that~$g$ is an infinite path, {because for every~$\alpha$, if
the set~$S_\aaa$ is infinite then so is $S_{\aaa \ape 2}$}.

Fix~$e$ and let $\aaa = g\uhr e$. Let $p(s)$ be the least stage~$t$ such that
$S_{\aaa,t}$ has at least~$s$ elements. Since~$h$ dominates the computable
function~$p$, we will eventually always pick~$\aaa$ in the definition of
$a_s= \Gamma^h_e(s)$. Hence $F_e=^* S_\aaa$. This implies that~$F_e$ is
computable and $F_{e+1}\subseteq^*F_e$.
Clearly, {if~$S_{\aaa}$ is infinite, then} $S_\aaa \setminus S_\beta$ is
infinite for every $\beta\succ\aaa$. Thus $F_{e} \setminus F_{e+1}$ is
infinite.

Now let~$R$ be a computable set. Pick~$e$ such that $R= V_{e, 0}$ and $\ol R=
V_{e,1}$.
If $g(e)=0$, then $S_{g\, \uhr {e+1}} \sub V_{e,0}$ and hence $F_{e+1} \sub^*
R$. Otherwise, $S_{g\, \uhr {e+1}} \sub V_{e,1}$ and hence $F_{e+1} \sub^*
\ol R$.
\end{proof}

\section{Maximal independent families in computability}
\label{sec:4}

In this short section, we determine the complexity of the
computability-theoretic analog of the independence number~$\mathfrak i$ for
the Boolean algebra of computable sets. It~turns out that in the context of
the computable sets, \emph{maximal independent families} behave in a way
similar to ultrafilter bases.

Given a sequence $\seq {F_n}\sN n$, let $F_\ES = \NN$; for each nonempty
binary string~$\sss$ we write
\begin{equation} \label{eqn:Fsigma}
F_\sss = \bigcap_{\sss(i)= 1} F_i \cap \bigcap_{\sss(i)=0} \ol F_i.
\end{equation}
We call (a set~$F$ encoding) such a sequence \emph{independent} if each set
$F_\sss$ is infinite.

\begin{definition}
Given a Boolean algebra of sets~$\mathbb B$, the mass problem $\+I_\mathbb B$
is the class of sets~$F$ such that $\seq {F_n}\sN n$ is a family that is
\emph{maximal independent}, namely, it is independent, and for each set $R
\in \mathbb B$, there is~$\sss$ such that $F_\sss \sub^* R $ or $F_\sss\sub^*
\ol R$.
\end{definition}

In the following, we let~$\mathbb B$ be the Boolean algebra of computable
sets, and we drop the parameter~$\mathbb B$ as usual. An easy modification of
the proof of Theorem~\ref{thm:UFB->high} yields the following

\begin{thm}\label{thm:MIF->high}
Every maximal independent family~$F$ uniformly computes a dominating
function. In other words, $\+I \ge_s \mathrm{DomFcn}$.
\end{thm}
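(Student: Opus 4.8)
The plan is to imitate the proof of Theorem~\ref{thm:UFB->high} almost verbatim, replacing the role of the tower sets $F_i$ (minus a finite initial segment) by the Boolean combinations $F_\sss$. Recall the key device there: a uniformly computable sequence of nonempty $\Pi^0_1$-classes $P_0,P_1,\dots$ from Lemma~\ref{lem:sequence PI01} such that $P_e$ is a singleton (containing a computable set) when $\phi_e$ is total, and contains only bi-immune sets when $\phi_e$ is not total. The point of bi-immunity is exactly that no infinite computable set, and no complement of one, is contained in a bi-immune set. So for an independent family $F$, the set $F_\sss$ is infinite and computable, and hence for each $e$: if $\phi_e$ is total, then by maximality there are $\sss$ and $n$ with $F_\sss\setminus[0,n]$ contained in the computable set named by $P_e$ or in its complement; conversely, if some $F_\sss\setminus[0,n]$ is contained in a member $X$ of $P_e$ (or of $Q_e=\set{X}{\ol X\in P_e}$), then that $X$ (resp.\ $\ol X$) has an infinite computable subset, so $X$ is not bi-immune, forcing $\phi_e$ to be total.

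Concretely, I would write
\[
\phi_e\text{ is total}\iff(\exists\sss)(\exists n)\;[P_e\cap S_{F_\sss\setminus[0,n]}\neq\emptyset\ \text{or}\ Q_e\cap S_{F_\sss\setminus[0,n]}\neq\emptyset],
\]
where as before $S_C=\set{X\in2^\omega}{C\subseteq X}$. The only new ingredient compared to Theorem~\ref{thm:UFB->high} is that $F_\sss$ is a Boolean combination of finitely many columns of $F$ rather than a single column; but $F_\sss$ is still uniformly computable from $F$ (given $\sss$), so $S_{F_\sss\setminus[0,n]}$ is a $\Pi^0_1[F]$-class uniformly in $\sss$ and $n$, and nonemptiness of a $\Pi^0_1[F]$-class is a $\Pi^0_1[F]$-condition. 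Quantifying existentially over the (pairs $\sss$ and) numbers $n$ shows that $\mathrm{Tot}$ is $\Sigma^0_2[F]$ via a fixed index not depending on $F$. Since $\mathrm{Tot}$ is also $\Pi^0_2$, it is $\Delta^0_2[F]$ via a fixed pair of indices, hence Turing reducible to $F'$ via a fixed reduction, and the (relativized) Limit Lemma lets us uniformly compute an approximation to $\mathrm{Tot}$ from $F$; by Fact~\ref{fact:sty} we then uniformly compute a dominating function. This gives $\+I\ge_s\mathrm{DomFcn}$.

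I expect no real obstacle here: the argument is a routine adaptation. The one point that needs a word of care is the forward direction of the displayed equivalence, i.e.\ checking that maximality of the independent family really does produce a witness $\sss,n$ when $\phi_e$ is total. Maximality says for each computable $R$ there is $\sss$ with $F_\sss\sub^* R$ or $F_\sss\sub^*\ol R$; applying this to $R$ equal to the computable set that is the lone element of $P_e$ (and noting that $F_\sss\sub^* R$ means $F_\sss\setminus[0,n]\subseteq R$ for some $n$, while $F_\sss\sub^*\ol R$ gives the $Q_e$-alternative) yields exactly the needed witness. The reverse direction uses bi-immunity of members of $P_e$ when $\phi_e$ is not total, exactly as in the proof of Theorem~\ref{thm:UFB->high}. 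So the proof is essentially: point to Lemma~\ref{lem:sequence PI01}, restate the equivalence with $F_\sss$ in place of $F_i\setminus[0,n]$, and invoke the same complexity-counting and Limit Lemma steps.
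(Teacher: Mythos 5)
Your proposal is correct and follows essentially the same route as the paper: the paper's proof of Theorem~\ref{thm:MIF->high} likewise reuses the classes $P_e$ from Lemma~\ref{lem:sequence PI01}, states the identical equivalence with $F_\sss\setminus[0,n]$ in place of $F_i\setminus[0,n]$, and invokes the same $\Sigma^0_2[F]$/Limit Lemma argument. No gaps.
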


\begin{proof}
Define the $\PI 1$-classes~$P_e$ as in Lemma~\ref{lem:sequence PI01}. As
before let $Q_e = \set{X}{\ol{X}\in P_e}$ be the $\Pi^0_1$-class of
complements of elements of~$P_e$. Recall that for any set~$C$, we let $S_C =
\set{X\in 2^\omega}{C\subseteq X}$. Now we have that
\begin{align*}
\phi_e\text{ is total} \iff& (\exists \sss)(\exists n)\;
    [F_\sss\setminus [0,n] \text{ is a subset of some } \\
	& \qquad\qquad X\in P_e \text{ or its complement}] \\
	\iff& (\exists \sss )(\exists n)\; [P_e\cap S_{F_\sss\setminus
    [0,n]}\neq\emptyset \text{ or }Q_e\cap S_{F_\sss\setminus [0,n]}
    \neq\emptyset]
\end{align*}
As before, this shows that from~$F$ one can uniformly compute a dominating
function.
\end{proof}

\begin{thm}\label{thm: HighMief}
Every dominating function~$h$ uniformly computes a maximal independent
family. In other words, $\+I \le_s \mathrm{DomFcn}$.
\end{thm}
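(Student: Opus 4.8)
The plan is to adapt the construction from Theorem~\ref{thm: High}, replacing the ternary splitting tree that tracked a single linear tower by a construction that simultaneously maintains all the Boolean combinations $F_\sss$ and guarantees each is infinite. First I would set up, as before, an effective listing $\seq{\psi_e}$ of the $\{0,1\}$-valued partial computable functions defined on initial segments of~$\NN$, with $V_{e,k} = \{x : \psi_e(x) = k\}$ so that $\seq{(V_{e,0}, V_{e,1})}$ enumerates all pairs of a computable set and its complement. The goal is to build computable sets $F_0, F_1, \dots$ (not uniformly computable) so that $F_\sss$ is infinite for every binary string~$\sss$, and so that for each~$e$ there is some~$\sss$ with $F_\sss \sub^* V_{e,0}$ or $F_\sss \sub^* V_{e,1}$.

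The key idea is to maintain, for each finite level~$n$, a partition of a cofinite piece of $\NN$ into $2^n$ ``boxes'' indexed by strings of length~$n$, where box~$\sss$ will be (essentially) $F_\sss$. When processing index $e = n$, within each box~$\sss$ of level~$n$ we want to split into $F_{\sss 0}$ and $F_{\sss 1}$ using $V_{e,0}$ and $V_{e,1}$; but as in Theorem~\ref{thm: High}, one of the two halves may be finite when~$e$ is not a proper index. So in each box I would \emph{first} thin the box to a coinfinite subset (e.g.\ remove every other element), reserving the discarded half to be added back to whichever of $F_{\sss 0}$, $F_{\sss 1}$ needs padding; then split the thinned box by $V_{e,k}$ and redistribute the reserved elements so that both $F_{\sss 0}$ and $F_{\sss 1}$ end up infinite. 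Concretely: put $F_{\sss 0} = (\text{thinned box}_\sss \cap V_{e,0}) \cup (\text{odd-indexed reserved elements})$ and $F_{\sss 1} = (\text{thinned box}_\sss \cap V_{e,1}) \cup (\text{even-indexed reserved elements})$, so that each inherits infinitely many reserved elements regardless of $V_{e,k}$. The leftmost-infinite-path device then becomes: for a string~$\sss$ of length~$n$, there is a canonical stage-by-stage approximation to box~$\sss$, and a dominating function~$h$ can, uniformly in $n$ and the initial data, wait long enough to see the true (finite or growing) status of each box and thereby compute $F_\sss = {}^* (\text{final box}_\sss)$ for every~$\sss$. One then assembles a functional $\Psi$ with $(\Psi^h)_n = F_n$.

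For the verification, three things must be checked. First, every $F_\sss$ is infinite: this follows because at each splitting step both children of an infinite box are infinite (by the reserved-element redistribution), so by induction every box at every level is infinite, and $F_\sss$ differs from its box by a finite amount. Second, each $F_n$ is computable and $F$ is uniformly computable from any $h \in \mathrm{DomFcn}$: since~$h$ dominates the (partial, then total once truncated) computable functions that time the growth of each box, $h$ can pin down a finite modulus beyond which box~$\sss$ no longer changes, exactly as in the proof of Theorem~\ref{thm: High}, and $F_\sss$ is then the eventual box. Third, maximality: given computable~$R$, pick~$e$ with $R = V_{e,0}$, $\ol R = V_{e,1}$; at level $e$, take any box $\sss$ of length $e$ (it is infinite) and look at its two children — by construction $F_{\sss 0} \sub \wt{\text{box}}_\sss \cap V_{e,0}$ up to the reserved padding, which is a single coinfinite computable set; here I need the padding to still lie inside $V_{e,0}$ resp.\ $V_{e,1}$, so I should instead take the reserved elements \emph{before} thinning and split them by $V_{e,k}$ as well, i.e.\ reserve and redistribute only elements already known to be in $V_{e,0}$ (resp.\ $V_{e,1}$). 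With that adjustment $F_{\sss 0} \sub^* V_{e,0} = R$ and $F_{\sss 1} \sub^* V_{e,1} = \ol R$, giving the maximality requirement with $\sss 0$ or $\sss 1$.

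The main obstacle is the tension in the splitting step between keeping both children infinite (which wants ``generic'' padding elements that don't care about $V_{e,k}$) and keeping each child almost-contained in the correct $V_{e,k}$ (which forbids such indiscriminate padding). The resolution sketched above — only ever pad $F_{\sss k}$ with elements that are themselves in $V_{e,k}$, and guarantee there are infinitely many such by first intersecting the parent box with $V_{e,k}$ and only thinning afterwards — needs care when \emph{both} $\text{box}_\sss \cap V_{e,0}$ and $\text{box}_\sss \cap V_{e,1}$ are finite, i.e.\ when $\psi_e$ is not total; in that case $F_{\sss k}$ cannot be made infinite while staying inside $V_{e,k}$. But that is fine: when $e$ is not a proper index there is no maximality obligation for that~$e$, so there I revert to the indiscriminate every-other-element padding to keep both children infinite, and only use $V_{e,k}$-respecting padding along branches where $\text{box}_\sss \cap V_{e,k}$ has already been observed to be large. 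Making ``observed to be large at the current stage'' into a clean, stage-wise rule that still lets a dominating function recover the truth is the one place where the argument requires genuine bookkeeping rather than a routine adaptation.
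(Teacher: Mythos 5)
Your proposal is correct and follows essentially the same route as the paper: split each Boolean-combination box $F_\sss$ at level $e$ according to $(V_{e,0},V_{e,1})$ when the box genuinely splits, and otherwise fall back to a padding rule that keeps both children infinite, with the dominating function deciding which case holds. The bookkeeping you flag as the remaining work is exactly what the paper supplies: it builds $F_e$ on $h$-computed intervals $[r^e_n, r^e_{n+1})$, choosing $r^e_{n+1}$ after running $\psi_e$ for $h(r^e_n)$ steps to see whether it splits $F_\sss$ past $r^e_n$; if so it copies $\psi_e$ on the whole interval (so $F_{\sss\ape k}=^*F_\sss\cap V_{e,k}$, and in particular every element of the box is assigned to exactly one child, as it must be since $F_{\sss\ape 1}=F_\sss\cap F_e$), and if not it puts exactly the least element of $F_\sss\cap[r^e_n,r^e_{n+1})$ into $F_e$ and omits the rest, with computability of each $F_e$ following because $h$ dominates the computable function that times the true splits.
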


\n In fact, we will prove that a dominating function~$h$ uniformly computes a
set~$F$ such that the $=^*$-equivalence classes of the sets~$F_e$ freely
generate the Boolean algebra of computable sets modulo finite sets. This
clearly implies that~$F$ is maximal independent: If~$R$ is an infinite
computable set, then for some~$e$ and nonempty set~$S$ of strings of
length~$e$, one has $R=^* \bigcup_{\sss \in S} F_\sss$, and hence
$F_\sss\sub^* R$ for some~$\sss$.

\begin{proof}
As in the proof of Theorem~\ref{thm: High}, let $\seq{\psi_e}\sN e$ be an
effective listing of the $\{0,1\}$-valued partial computable functions
defined on an initial segment of~$\NN$, and let $V_{e,k} =\{x\colon
\psi_e(x)=k\}$ for $k = 0,1$.

In Phase~$e$ of the construction, we will define a computable set~$F_e$ such
that $F_e= \Theta_e^h$ for a Turing functional~$\Theta_e$ determined
uniformly in~$e$. Suppose we have defined~$\Theta_i$ for $i< e$, and thereby
have defined the sets~$F_\sss$ given by~\eqref{eqn:Fsigma} for each
string~$\sss$ of length~$e$.

The idea for building~$F_e$ is to attempt to follow~$V_{e,0}$ while
maintaining independence from the previous sets. We apply this strategy
separately on each~$F_\sss$. Using~$h$ as an oracle we compute recursively an
increasing sequence $ \seq{r^e_n}\sN n$. We carry out the attempts on
intervals $[r^e_n, r^e_{n+1})$. If~$V_{e,0}$ appears to split~$F_\sss$ on the
current interval, then we follow it; otherwise, we merely make sure
that~$F_e$ remains independent from~$F_\sss$ on the interval by putting one
number in and leaving another one out. To decide which case holds, we consult
the dominating function $h$ as an oracle.

We now provide the details for Phase~$e$. Let $r^e_0=0$. If~$r^e_n$ has been
defined, let $r^e_{n+1}> r^e_n$ be the least number~$r$ such that for
each~$\sss$ of length~$e$, the following two conditions hold: \bi
\item[(a)$_\sss$]
$|[r^e_n, r) \cap F_\sss | \ge 2$;
\item[(b)$_\sss$]
if there are $u, w \in \dom (\psi_{e, h(r^e_n)})\cap F_\sss$ with $r^e_n \le
u< w$ such that $\psi_e(u)= 1 $ and $\psi_e(w) =0$, then $r>w$ for the least
such~$w$. \ei

\n We define $F_e(x) = \Theta_e^h(x)$ for $x\in [r^e_n, r^e_{n+1})$ as
follows. Let~$\sss$ be the string of length~$e$ such that $x \in F_\sss$.

\bi
\item
If the hypothesis of condition~(b)$_\sss$ holds and~$\psi_e$ is defined on
$[r^e_n, r^e_{n+1})$, then let $F_e(x)= \psi_e(x)$;
\item
otherwise, if $x=\min ([r^e_n, r^e_{n+1}) \cap F_\sss)$, let $F_e(x) = 1$,
else let $F_e(x)=0$. \ei
\medskip

\n \emph{Verification.} By induction on~$e$, one verifies that for each
function~$h$, the set~$F_\sss$ is infinite for each~$\sss$ with $\sssl = e$,
and that the sequence $\seq{r^e_n}\sN n$ defined in Phase~$e$ of the
construction is infinite. Thus~$\Theta^h_e$ is total. So $F \le_T h$ where
$F_e=\Theta^h_e$, and~$F$ is an independent family.

\begin{claim}
Each set~$F_e$ is computable.
\end{claim}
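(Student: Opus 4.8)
The goal is to show that each $F_e = \Theta_e^h$, built in Phase~$e$ using the dominating function~$h$, is a computable set. The point is that although $\Theta_e^h$ is defined using the oracle~$h$, the set it produces depends on~$h$ only through the auxiliary sequence $\seq{r^e_n}\sN n$, and once that sequence stabilizes to its true value, the bits of $F_e$ on each interval $[r^e_n, r^e_{n+1})$ are determined by purely computable data (the previously-constructed computable sets $F_\sss$ for $\sssl = e$, together with the partial computable function $\psi_e$). So the strategy is to argue that $F_e$ agrees, up to finitely much, with a set defined without reference to~$h$, and then conclude computability. Since the whole construction is symmetric in~$h$ only via the thresholds $h(r_n)$, and $h$ dominates the relevant computable "settling-time" function, I expect this to go through cleanly.

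First I would record, by the inductive hypothesis of the Verification paragraph, that each $F_\sss$ with $\sssl = e$ is an infinite computable set, and that $\seq{r^e_n}\sN n$ is an infinite increasing sequence. Next I would observe that the true values $r^e_{n+1}$ are computed from $r^e_n$ by searching for the least $r$ satisfying conditions~(a) and~(b) with the argument $h(r_n)$; the only non-computable input here is $h(r_n)$. The key point is that condition~(b) is monotone in the oracle value in the sense that makes it stable: for $r_n$ fixed, let $m_n$ be the least stage by which, for every~$\sss$ with $\sssl = e$, the question "are there $u,w \in \dom(\psi_{e,t}) \cap F_\sss$ with $r_n \le u < w$, $\psi_e(u)=1$, $\psi_e(w)=0$" has reached its eventual answer (such an $m_n$ exists because there are finitely many~$\sss$ and each such existential question is $\Sigma^0_1$, hence stabilizes). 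Then for all $t \ge m_n$, conditions~(a) and~(b) evaluated with parameter~$t$ give the same least~$r$; call it $\rho(r_n)$, which is a computable function of $r_n$. Since $h$ dominates the computable function $n \mapsto m_{r_n^\ast}$ built along the true sequence (more precisely: since $h(s)\ge s$ and $h$ dominates every computable function, in particular one bounding the relevant $m_n$'s), for all sufficiently large~$n$ we have $h(r^e_n) \ge m_{r^e_n}$, and therefore $r^e_{n+1} = \rho(r^e_n)$ from some point on.

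Now I would run the argument home. Let $N$ be large enough that $r^e_{n+1} = \rho(r^e_n)$ for all $n \ge N$. Define a computable set $\widehat F_e$ that on $[0, r^e_N)$ takes arbitrary (say zero) values, and on each interval $[r^e_n, r^e_{n+1})$ with $n \ge N$ is defined by the same two clauses as in the construction but with every occurrence of $h(r_n)$ replaced by $m_{r^e_n}$ (equivalently, by "a large enough stage"): for each~$\sss$, if condition~(b) holds and $\psi_e$ is defined on the whole interval then set $\widehat F_e(x) = \psi_e(x)$ there, otherwise put the minimum of $[r^e_n, r^e_{n+1}) \cap F_\sss$ in and leave everything else out. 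This is a legitimate computable definition: the sequence $r^e_N, r^e_{N+1}, \ldots$ is computable (it is $\rho$ iterated from the constant $r^e_N$), and on each interval the finitely many $F_\sss$ and $\psi_e$ are all computable with the interval endpoints known. By the choice of~$N$ and~$m$, the construction's own clauses on $[r^e_n, r^e_{n+1})$ for $n \ge N$ use a parameter $h(r^e_n) \ge m_{r^e_n}$ that is already past the settling point for condition~(b), so they produce exactly the bits of $\widehat F_e$. Hence $F_e =^* \widehat F_e$, and a set that differs finitely from a computable set is computable.

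**Main obstacle.** The delicate point is verifying that condition~(b) genuinely stabilizes as the stage parameter grows, and doing so uniformly over the finitely many strings~$\sss$ of length~$e$ — in other words, pinning down the function $\rho$ and its settling times $m_n$ and checking they are computable. I do not expect this to be hard, since it is just the observation that a finite conjunction of $\Sigma^0_1$ facts (existence of a "split witness" for $\psi_e$ inside the computable set $F_\sss$ above a fixed point $r_n$) reaches a limit, but it is where the argument must be written carefully, because the construction deliberately let $h(r_n)$, not a computable bound, govern how far to search. Everything else — that $F_e=^*\widehat F_e$ and that $=^*$-equivalence preserves computability — is routine.
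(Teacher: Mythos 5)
Your overall strategy is the same as the paper's: show that past a finite point the sequence $\seq{r^e_n}\sN n$ and the bits of $F_e$ can be recovered without the oracle~$h$, because $h(r_n)$ eventually exceeds a computable ``settling time,'' and then conclude via $F_e =^* \widehat F_e$. The gap sits exactly at the point you flag as the main obstacle, and the justification you offer there does not close it. You assert that $\rho(r_n)$ (the limiting value of $r_{n+1}$) is a computable function of $r_n$, and that the settling times $m_n$ are bounded by a computable function, on the grounds that finitely many $\SI 1$ facts stabilize. Stabilization gives the \emph{existence} of $m_n$, not its computability: to evaluate condition~(b) ``in the limit'' at $r_n$ you must decide, for each $\sss$ of length~$e$, whether witnesses $u<w$ in $F_\sss$ with $\psi_e(u)=1$ and $\psi_e(w)=0$ exist above $r_n$ at all. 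If they do, a search finds them; if they do not, the search diverges and you never learn the answer. So neither $\rho$ nor a bound on $n \mapsto m_{r_n}$ is computable on the basis you give, and without a computable bound the appeal to domination by~$h$ has nothing to act on.

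The missing ingredient is the dichotomy the paper encodes in the finite parameter $D_e = \{\sss : \sssl = e \land |F_\sss\cap V_{e,0}| = |F_\sss\cap V_{e,1}| = \infty\}$, together with the separate (easy) case where $\psi_e$ is partial. For $\sss \in D_e$ the witnesses for~(b) exist above every threshold, so the search for them always terminates and its running time is a computable function of the threshold; for $\sss \notin D_e$ the antecedent of~(b) fails for all sufficiently large $r_n$, so those $\sss$ contribute nothing to the settling time from some point on. Hard-coding the finite set $D_e$ and a suitable threshold as non-uniform parameters makes your $\rho$ and your bound on the $m_n$'s genuinely computable on a tail, which is all the (non-uniform) claim requires; with that insertion your argument coincides with the paper's.
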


\n We verify this by induction on~$e$. Suppose it holds for each $i<e$. So
$F_\sss$ is computable for $\sssl =e$.

First assume that~$\dom (\psi_e)$ is finite. Then for sufficiently large~$n$,
condition~(b)$_\sss$ does not apply to any string $\sss$ of length~$e$, and
so the sequence $\seq {r^e_n}\sN n$ is computable. Hence~$F_e$ is computable.

Now assume that~$\psi_e$ is total. Let \[ D_e = \{\sss \colon \, \sssl = e
\land |F_\sss \cap V_{e,0} | = |F_\sss \cap V_{e,1}| = \infty\}. \] Define a
function~$p$ by letting $p(m)$ be the least stage~$s$ such that for each
$\sss \not \in D_e$, condition~(a)$_\sss$ holds with $r^e_n= m$ and $r=s$,
and for each $\sss \in D_e$, there are $u,w \in \dom ({\psi_{e,s}})$ such
that $m \le u<w $ as in the hypothesis of condition (b)$_\sss$. (Let $p(m) =
0$ if~$m$ is not of the form~$r^e_n$.) Since~$F_\sss$ is computable for
each~$\sss$ of length~$e$, the function~$p$ is computable. Since~$h$
dominates~$p$, for sufficiently large~$n$, we will define~$r^e_{n+1}$ by
checking the convergence of computations~$\psi_e(z)$ at a stage $h(r^e_n) \ge
p(r^e_n)$; since in Phase~$e$ of the construction, we chose the witnesses
minimal, $r^e_{n+1}$ is determined by stage $p(r^e_n)$. So we might as well
check the convergence of computations~$\psi_e(z)$ at stage $p(r^e_n)$. Hence
again, the sequence $\seq {r^e_n}\sN n$ is computable.

\begin{claim}
Suppose that~$\psi_e$ is total. Then for each string $\tau = \sss \ape a$ of
length $e+1$, $F_\tau \sub ^* V_{e,0} $ or $F_\tau \cap V_{e,0} =^* \ES$ (so
that $V_{e,0}=^*\bigcup_\tau \{ F_\tau \colon F_\tau \sub ^* V_{e,0}\}$) .
\end{claim}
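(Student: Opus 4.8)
The plan is to recall from the construction of Phase~$e$ exactly how $F_e$ behaves on the intervals $[r_n, r_{n+1})$ that lie below a fixed string $\tau = \sss \ape a$ of length $e+1$, distinguishing the two cases according to whether $\sss \in D_e$ or not. Fix such a $\tau$. Note that $F_\tau = F_\sss \cap F_e$ if $a = 1$, and $F_\tau = F_\sss \cap \ol{F_e}$ if $a = 0$; in either case $F_\tau \subseteq F_\sss$. Since $\psi_e$ is total, $V_{e,0}$ and $V_{e,1} = \ol{V_{e,0}}$ are complementary computable sets and $\dom(\psi_{e,s}) = [0,s)$ eventually covers every number, so condition~(b) for~$\sss$ reduces to the purely combinatorial question of whether $F_\sss$ meets both $V_{e,0}$ and $V_{e,1}$ infinitely often, i.e.\ whether $\sss \in D_e$.

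First I would handle the case $\sss \notin D_e$. Then $F_\sss \cap V_{e,0}$ or $F_\sss \cap V_{e,1}$ is finite. If $F_\sss \cap V_{e,0}$ is finite, then $F_\tau \cap V_{e,0} \subseteq F_\sss \cap V_{e,0}$ is finite, so $F_\tau \cap V_{e,0} =^* \ES$, as desired. If instead $F_\sss \cap V_{e,1}$ is finite, then $F_\sss \sub^* V_{e,0}$, hence $F_\tau \sub^* V_{e,0}$, again as desired. So in this subcase the conclusion holds regardless of the value of $a$, and in fact does not even use the bit $a$; it just reflects that $F_\sss$ itself is $\sub^*$ one of $V_{e,0}, V_{e,1}$.

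Next I would handle the main case $\sss \in D_e$. Here condition~(b) applies for $\sss$ at all sufficiently large stages $r_n$: since $\psi_e$ is total and $F_\sss$ meets both $V_{e,0}$ and $V_{e,1}$ infinitely often, for all large $n$ there are $u < w$ in $\dom(\psi_{e,h(r_n)}) \cap F_\sss$ above $r_n$ with $\psi_e(u)=1$ and $\psi_e(w)=0$. Hence for all large $n$, on the interval $[r_n, r_{n+1})$ we are in the first bullet of the definition of $F_e$ relative to $\sss$, so $F_e(x) = \psi_e(x)$ for $x \in [r_n,r_{n+1})$ — at least, this is the value dictated by the clause for $\sss$. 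The delicate point is that the same interval $[r_n, r_{n+1})$ is shared by all strings of length $e$, and the definition of $F_e(x)$ chooses a value based on the unique $\sss$ of length $e$ with $x \in F_\sss$; but the $F_\sss$ partition $\NN$ (every $x$ lies in exactly one $F_\sss$, namely $\sss(i) = F_i(x)$), so the value $F_e(x)$ for $x \in F_\sss$ is governed precisely by the clause for that $\sss$. Therefore, for $x \in F_\tau$ with $x$ in a large interval, $x \in F_\sss$ and $F_e(x) = \psi_e(x)$, i.e.\ $F_e(x) = 1 \iff x \in V_{e,0}$.

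Finally I would assemble the conclusion. In the case $\sss \in D_e$: if $a = 1$, then for large $x \in F_\tau$ we have $x \in F_e$, so $F_e(x)=1$, so $x \in V_{e,0}$; hence $F_\tau \sub^* V_{e,0}$. If $a = 0$, then for large $x \in F_\tau$ we have $x \notin F_e$, so $F_e(x) = 0$, so $\psi_e(x) = 0$, so $x \in V_{e,1} = \ol{V_{e,0}}$; hence $F_\tau \cap V_{e,0} =^* \ES$. Combined with the $\sss \notin D_e$ case above, this gives the claim. The main obstacle I anticipate is being careful about the interaction between the shared intervals $[r_n, r_{n+1})$ and the per-$\sss$ clauses defining $F_e$: one must justify that $F_e \restriction F_\sss$ is controlled solely by the clause for $\sss$, which follows because the sets $F_\sss$ (for $\sssl = e$) partition $\NN$, so there is no conflict between clauses. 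The second thing to be careful about is that "condition~(b) holds for $\sss$" at stage $r_n$ involves $\dom(\psi_{e,h(r_n)})$ rather than $\dom(\psi_e)$, but since $\psi_e$ is total and $h(r_n) \ge r_n \to \infty$, this poses no problem for large $n$.
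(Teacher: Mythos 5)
Your proof follows the same two-case decomposition as the paper ($\sss \notin D_e$ versus $\sss \in D_e$) and correctly supplies the points the paper's one-line argument leaves implicit: that the clauses for distinct $\sss$ of length $e$ cannot conflict because the sets $F_\sss$ partition $\NN$, and that for $\sss \in D_e$ condition~(b) eventually always applies, so $F_e$ copies $\psi_e$ on $F_\sss$. Two corrections are needed. First, with the paper's convention $V_{e,k} = \{x : \psi_e(x) = k\}$, the clause $F_e(x) = \psi_e(x)$ gives $x \in F_e \iff x \in V_{e,1}$, not $V_{e,0}$; so your biconditional ``$F_e(x)=1 \iff x \in V_{e,0}$'' is false as written, and in your final assembly the disjuncts are attached to the wrong values of $a$: for $a=1$ one gets $F_\tau =^* F_\sss \cap V_{e,1}$, hence $F_\tau \cap V_{e,0} =^* \ES$, and for $a=0$ one gets $F_\tau \sub^* V_{e,0}$. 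Since the claim is a disjunction, this swap does not affect its truth, but it should be fixed. Second, to conclude that condition~(b) holds at all large $r_n$ for $\sss \in D_e$, the observation that $h(r_n) \ge r_n \to \infty$ is not enough: $\psi_e$ could converge very slowly, so you must invoke the fact that $h$ dominates the computable function $p$ giving the least stage by which witnesses $u<w$ in $F_\sss$ above $r_n$ with $\psi_e(u)=1$ and $\psi_e(w)=0$ appear in $\dom(\psi_{e,\cdot})$ --- exactly the function used in the paper's preceding claim that each $F_e$ is computable.
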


\n Let $D_e$ be as above. If $\sss \not \in D_e$, then this is immediate since
$F_\sss \sub^* V_{e,i}$ for some~$i$. Otherwise, Phase~$e$ of the
construction ensures that $F_{\sss \ape 0} =^* F_\sss \cap V_{e,0}$.

By the last claim, the $=^*$-equivalence classes of the~$F_e$ freely generate
the Boolean algebra of the computable sets modulo finite sets. In
particular,~$F$ is a maximal independent family.
\end{proof}

As mentioned in the introduction, we do not know at present whether there is
a ``natural'' Medvedev equivalence between the two mass problems~$\+U$ and
$\+I$ as is the case for~$\+A$ and~$\+T$. This would require direct proofs
avoiding the detour via the mass problem of dominating functions. For what it
is worth, the cardinal characteristics~$\mathfrak{u}$ and~$\mathfrak{i}$ are
incomparable (i.e., ZFC cannot determine their order).

\section{The case of computably enumerable complements} \label{s:3}

Recall from Fact~\ref{fa:silly} that no maximal tower, and in particular no
ultrafilter base, can be computably enumerable. In contrast, in this section
we will see that even ultrafilter bases can have computably enumerable
complement. As in the previous sections, we are restricting our attention to
the Boolean algebra of all computable sets.

Recall that a coinfinite c.e.\ set~$A$ is called \emph{simple} if it meets
every infinite c.e.\ (or, equivalently, every computable) set;~$A$ is called
\emph{$r$-maximal} if $\ol A \sub^* \ol R$ or $\ol A \sub^* R$ for each
computable set~$R$. Each $r$-maximal set is simple. For more background, see
Soare~\cite{Soare:87}.

\subsection{Computably enumerable MAD sets, and co-c.e.\ towers}

We will show that if~$A$ is a noncomputable c.e.\ set, then there is a
co-c.e.\ maximal tower~$G\leq_T A$. Given that it is more standard to build
c.e.\ rather than co-c.e.\ sets, it will be convenient to first build a c.e.\
MAD set $F\le_T A$
and then use the Medvedev
reduction in Fact~\ref{fa:ATA} to obtain a co-c.e.\ maximal tower. We employ a
priority construction with requirements that act only finitely often.


\begin{thm}\label{MAD ce}
For each noncomputable c.e.\ set~$A$, there is a MAD c.e.\ set $F \le_T A$.
\end{thm}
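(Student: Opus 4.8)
The plan is to build a c.e.\ set $F$ coding an almost disjoint sequence $\seq{F_n}\sN n$ of infinite computable sets, with a priority construction relative to the oracle $A$. The positive requirements are that each $F_n$ be infinite (and pairwise almost disjoint, which we arrange by partitioning $\omega$ at the outset). The maximality (MAD-ness) requirements are
\[
\mathcal{R}_e:\quad W_e \text{ infinite} \;\Longrightarrow\; W_e \cap F_n \text{ is infinite for some } n,
\]
ranging over all c.e.\ sets $W_e$ (equivalently, over all infinite computable sets, since every infinite computable set is an infinite c.e.\ set). The subtlety is that we must \emph{not} make $F$ MAD unconditionally — that would need at least $\emptyset'$ — so we will only diagonalize against those $W_e$ that look like they will be infinite, and we must reserve a way to fail (i.e., leave $W_e$ disjoint from every $F_n$) so that the construction correctly senses non-computability of $A$.

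Here is the mechanism I would use, following the ``permitting'' idea. Fix a computable partition $\omega = \bigsqcup_n I_n$ into infinite computable pieces; think of $F_n$ as living inside $\bigcup_{m\le \text{(something)}} I_m$-style blocks so that fresh room is always available. Requirement $\mathcal{R}_e$ proceeds by picking a large follower interval and waiting for $W_e$ to put many elements into it; when it sees a fresh large element $x$ enter $W_e$ at stage $s$, it wants to \emph{absorb} a block of elements around $x$ into some $F_{n}$ (an $F_n$ it controls, with $n$ large, so as not to disturb higher-priority requirements or the already-fixed finite part of lower-indexed columns). But it is only allowed to do so if $A$ has changed below a use marker $u(e,s)$ that $\mathcal{R}_e$ has posted — i.e., $A_s \uhr u(e,s) \neq A_{s-1}\uhr u(e,s)$. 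If $A$ never permits, $\mathcal{R}_e$ is stuck, $W_e$ may be infinite yet disjoint from all $F_n$, and $F$ fails to be MAD — but this will only happen if $A$ is computable, which we will rule out. The standard way to force this: tie the use markers to a potential reduction computing $A$, so that if $\mathcal{R}_e$ acts only finitely often for \emph{every} $e$ while some $W_e$ is infinite, then $A$ is computable, a contradiction. Concretely, one arranges that the stages at which $\mathcal{R}_e$ is allowed to act are exactly the $A$-true stages relative to its marker, and absence of permission forever past some point lets $\emptyset$ (or a low oracle) decide $A$ on an infinite computable set — contradicting that $A$ is noncomputable (one uses here that a c.e.\ set which is ``computably permitting-trivial'' in this sense is computable).

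The steps in order: (1) set up the partition and assign to $\mathcal{R}_e$ an infinite computable supply of column-indices and follower intervals, disjoint across requirements; (2) describe the action of $\mathcal{R}_e$: monitor $W_e$, on a fresh large witness request a permission from $A$ via a use marker, and upon permission enumerate a block into the designated $F_n$; (3) verify $F \le_T A$ — each time we enumerate into $F$ we have a corresponding $A$-change below the current use, and the uses are managed so that $A$ can replay the construction, hence compute $F$ (this is where one must be careful that uses are monotone and that the total enumeration into $F$ is $A$-bounded); (4) verify each $F_n$ is infinite and the $F_n$ stay pairwise almost disjoint — immediate from the disjoint assignment of intervals and the fact that each $F_n$ only grows within its allotted region; (5) verify maximality: if $W_e$ is infinite, then $\mathcal{R}_e$ requests permissions infinitely often, and if it received only finitely many permissions then $A$ restricted to an infinite computable set is computable, making $A$ computable, contradiction; hence $\mathcal{R}_e$ is met. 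Finally invoke Fact~\ref{fa:ATA} (the reduction $\mathrm{Cp}$) to convert $F$ into a co-c.e.\ maximal tower $G \le_T F \le_T A$, which is Corollary~\ref{freaky}.

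I expect the main obstacle to be step (3) combined with step (5): arranging the permitting so that simultaneously (a) $F$ is genuinely computable from $A$ — which constrains how much and when we may enumerate — and (b) a \emph{total} failure of permitting across all requirements is impossible unless $A$ is computable. The delicate point is that a single $\mathcal{R}_e$ failing is fine (it just isn't met, and that's consistent if $W_e$ is finite); we need the contrapositive argument to extract computability of $A$ from the hypothesis that some $W_e$ is infinite yet unmet, and this requires the use markers for that one requirement to encode enough of $A$ that its eventual silence computes $A$ on an infinite computable domain. Managing the interaction of infinitely many such encodings without conflict — i.e., a clean finite-injury (indeed, finitely-often-acting) priority assignment — is the real bookkeeping, though conceptually it is the classical ``simple permitting'' template adapted to build a MAD family rather than a simple set.
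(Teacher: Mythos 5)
Your overall route is the paper's: a simple-permitting construction of a c.e.\ almost disjoint family $F\le_T A$, followed by the reduction $\mathrm{Cp}$ of Fact~\ref{fa:ATA} to obtain the co-c.e.\ maximal tower. But the sketch has a genuine gap at exactly the point you flag as delicate, and it is not just bookkeeping. First, if $\mathcal{R}_e$ only harvests elements of $W_e$ lying in follower intervals assigned \emph{disjointly} to $\mathcal{R}_e$, then an infinite $W_e$ can avoid $\mathcal{R}_e$'s region entirely, so your step (5) claim that ``$W_e$ infinite implies $\mathcal{R}_e$ requests permissions infinitely often'' fails, and with it maximality. Second, the contrapositive is misstated: deciding $A$ on an infinite computable set is no contradiction with noncomputability of $A$ (consider $A$ restricted to the even numbers). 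What permitting actually requires is an unbounded supply of \emph{permanent} candidates $x$, so that for every $m$ some candidate $x>m$ eventually appears and the absence of later $A$-changes below $x$ decides $A(m)$; only then is all of $A$ computable.

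The paper fixes both problems with one device your sketch lacks: each maximality requirement is split into finitary subrequirements $P_{\langle e,k\rangle}$ demanding only $|H_e\cap M_e|\ge k$, each acting at most once, whose candidates are the elements of $M_e\setminus\bigcup_{i<\langle e,k\rangle}H_i$. Candidates are thus not confined to a private region; almost disjointness is instead recovered by a counting argument showing $|H_e\cap H_m|\le m$ for $e<m$. The hypothesis of $P_{\langle e,k\rangle}$ is that this candidate set is infinite. If it holds, candidates are unbounded and permitting succeeds as above. If it fails, then $M_e\subseteq^*\bigcup_{i<\langle e,k\rangle}H_i$, a finite union of almost disjoint columns, so $M_e$ already meets some column infinitely and the requirement is met for free. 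This dichotomy is the missing idea; without it (or a substitute), the case in which $W_e$ is infinite but its elements are swallowed, finitely many apiece, by infinitely many other columns is not handled. (The paper also interleaves $M_{2e+1}=\omega$ into the list of targets to force each column to be infinite; your fixed partition handles that part adequately.)
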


\begin{proof}
The construction is akin to Post's construction of a simple set. In
particular, it is compatible with permitting.

Let $\seq {M_e} \sN e$ be a uniformly c.e.\ sequence of sets such that
$M_{2e} = W_e$ and $M_{2e+1} = \NN$ for each~$e$. We will build an auxiliary
c.e.\ set $H \le_T A$ and let the c.e.\ set $F\le_T A $ be defined by
$F^{[e]} = H^{[2e]} \cup H^{[2e+1]}$. The purpose of the sets~$M_{2e+1}$ is
to make the sets~$H^{[2e+1]}$, and hence the sets~$F^{[e]}$, infinite. The
construction also ensures that~$H$, and hence~$F$, is AD, and that $\bigcup_n
H^{[n]}$ is coinfinite.

As usual, we will write~$H_e$ for~$H^{[e]}$.
We provide a stage-by-stage construction to meet the requirements
\[
P_n \colon M_e\setminus \bigcup_{i<n} H_i \text { infinite } \Rightarrow
   |H_e \cap M_e | \ge k \text{, where } n= \langle e, k \rangle.
\]
(Note that the union is over all~$i$ such that $i<n$, not $i<e$.) At
stage~$s$, we say that~$P_n$ is \emph{ permanently satisfied} if $|H_{e,s}
\cap M_{e,s}| \ge k$.

\medskip
\noindent \emph{Construction.}

\nopagebreak \noindent \emph{Stage $s>0$}. See if there is $n< s$ such
that~$P_n$ is not permanently satisfied, and, where $ n= \langle e, k
\rangle$, there is $x \in M_{e,s}\setminus \bigcup_{i<n} H_{i,s}$ such that
\bc $x > \max (H_{e,s-1})$, $x \ge 2n$, and $A_s \uhr x \neq A_{s-1} \uhr x$.
\ec If so, choose~$n$ least, and put $\langle x, e \rangle$ into~$H$ (i.e.,
put~$x$ into~$H_e$).
\medskip

\noindent \emph{Verification.} Each~$H_e$ is enumerated in increasing fashion
and hence computable.

Each~$P_n$ is active at most once.
This ensures that $\bigcup_e H_e$ is coinfinite: For each~$N$, if $x< 2N$
enters this union, then this is due to the action of a requirement~$P_n$ with
$n < N$, so there are at most~$N$ many such~$x$.

To see that a requirement~$P_n$ for $ n= \langle e, k \rangle$ is met,
suppose that its hypothesis holds. Then there are potentially infinitely many
candidates~$x$ that can go into~$H_e$. Since~$A$ is noncomputable, one of
them will be permitted.

Now, by the choice of $M_{2e+1}$ and the fact that $\bigcup_e H_e$ is
coinfinite, each $H_{2e+1}$, and hence each~$F_e$, is infinite. We claim that
for $e< m$, we have $|H_e \cap H_m| \le m$. For suppose that $x \in H_m$
enters $H_e$ at stage $s$. Then $x \in H_{m,s}$ since $r \ge \la m,0\ra > e$
for any requirement $P_r$ putting $x$ into $H_m$. Suppose $P_n$ puts $x$ into
$H_e$ at stage $s$, where $n= \la e,k\ra$. Then $n \le m$, so the claim
follows as each requirement is active at most once. We conclude that the
family described by~$H$, and therefore also the one described by~$F$, is
almost disjoint.

To show that~$F$ is MAD, it suffices to verify that if~$M_e$ is infinite then
$H_p \cap M_e $ is infinite for some~$p$.  If all the~$P_{\la e,k\ra}$ are
satisfied during the construction, we let $p=e$. Otherwise, we let~$k$ be
least such that~$P_{n}$ is never satisfied where $n= \langle e,k \rangle$.
Then its hypothesis fails, so $M_e \sub^* \bigcup_{i<n} H_i $. Hence $H_p
\cap M_e $ is infinite for some~$p < n$ by the pigeonhole principle.
\end{proof}



Since an index guessable set computes no MAD set by
Proposition~\ref{pr:halt}, we obtain the following

\begin{cor}
If a c.e.\ set~$L$ is index guessable, then $L$ is computable.
\end{cor}

Downey and Nies have given a direct proof of this fact;
see~\cite{LogicBlog:19}.

\begin{cor}\label{freaky}
For each noncomputable c.e.\ set~$A$, there is a co-c.e.\ set $G \le_T A$
such that $G \in \+T$, i.e., $\seq{G_n}\sN n$ is a maximal tower.
\end{cor}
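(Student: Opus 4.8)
The plan is to combine Theorem~\ref{MAD ce} with the Medvedev reduction already established in Fact~\ref{fa:ATA}. By Theorem~\ref{MAD ce}, given a noncomputable c.e.\ set~$A$, there is a MAD c.e.\ set $F \le_T A$. The reduction $\mathrm{Cp}$ from the proof of Fact~\ref{fa:ATA} turns such an~$F$ into a $\B$-tower $G = \mathrm{Cp}(F)$ that is maximal, and moreover $G \le_T F \le_T A$. So the only thing left to observe is that~$G$ is co-c.e.; this is exactly the point recorded at the end of the proof of Fact~\ref{fa:silly}, namely that $\mathrm{Cp}$ sends a co-c.e.\ set to a c.e.\ set, hence (applying it the other way, as here) it sends a c.e.\ set to a co-c.e.\ set. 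Indeed, recall that $x \in G_n \lra \fa i < n \, [x \notin F_i]$, so $x \notin G_n$ iff $x \in F_i$ for some $i<n$, which is a c.e.\ condition when~$F$ is c.e.

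Concretely, I would write: Let~$A$ be a noncomputable c.e.\ set. By Theorem~\ref{MAD ce}, fix a MAD c.e.\ set $F \le_T A$. Put $G = \mathrm{Cp}(F)$, so that $x \in G_n$ iff $x \notin F_i$ for all $i<n$. Since~$\mathrm{Cp}$ is a Turing functional, $G \le_T F \le_T A$. Since~$F$ is a MAD family of computable sets, Fact~\ref{fa:ATA} gives that~$G$ is a maximal $\B$-tower, i.e.\ $G \in \+T$. Finally, the complement of~$G$, viewed as a subset of~$\NN$ via the pairing, is $\{\langle x,n\rangle : \ex i<n \; x \in F_i\}$, which is c.e.\ because~$F$ is c.e.\ and the bounded quantifier is harmless; hence~$G$ is co-c.e.

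There is essentially no obstacle here — this corollary is a bookkeeping consequence of the theorem just proved together with two earlier facts. The only mild point to be careful about is that~$\+T$ and~$\+A$ are defined as classes of \emph{sets} $X \sub \NN$ (encoding sequences via columns), so one should make sure the "co-c.e." claim is about~$G$ as a subset of~$\NN$ under the pairing $\langle\cdot,\cdot\rangle$, not merely about each column~$G_n$ individually; but since $\langle x,n\rangle \ge x,n$ and the definition of~$G_n$ involves only a bounded search through the c.e.\ sets~$F_0,\dots,F_{n-1}$, the full set~$G$ is plainly co-c.e. So the proof is just the two lines above, and it can legitimately cite Theorem~\ref{MAD ce} and Fact~\ref{fa:ATA}.
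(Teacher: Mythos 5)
Your proof is correct and is essentially identical to the paper's: the paper also takes the MAD c.e.\ set $F\le_T A$ from Theorem~\ref{MAD ce} and sets $G=\mathrm{Cp}(F)$, citing Fact~\ref{fa:ATA} for maximality and the observation from Fact~\ref{fa:silly} that $\mathrm{Cp}$ flips c.e.\ to co-c.e. Your extra care about $G$ being co-c.e.\ as a subset of $\NN$ under the pairing (not just columnwise) is a fine, if unneeded, elaboration.
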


\begin{proof}
Let~$F$ be the MAD set obtained above. Recall the Turing reduction
$\mathrm{Cp}$ showing that $\+T \le_s \+A$ in Fact~\ref{fa:ATA}. The set $G
=\mathrm{Cp}(F)$, given by
\[
x \in G_n \lra \fa i < n \, [ x \not \in F_n]
\]
is as required.
\end{proof}

\subsection{Co-c.e.\ ultrafilter bases}
%



We next construct a co-c.e.\ ultrafilter base~$F$ for the Boolean algebra of
computable sets. That is,~$F$ is co-c.e., each~$F_e$ is computable (but not
uniformly so), and~$F$ is a tower satisfying the condition in
Definition~\ref{def:UFB}.

\begin{thm} \label{th:coce UFB}
There is a co-c.e.\ ultrafilter base~$F$.
\end{thm}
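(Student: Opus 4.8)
The plan is to build the complement of $F$ as a c.e.\ set in stages, so that $F$ is co-c.e., while arranging directly that $\seq{F_n}\sN n$ is an ultrafilter base. In view of Fact~\ref{fa:ATA} and Fact~\ref{fa:silly}'s companion observation that $\mathrm{Cp}$ sends co-c.e.\ sets to c.e.\ sets (which is the wrong direction), I would \emph{not} try to go through MAD sets as in Corollary~\ref{freaky}; instead I would build the tower $\seq{F_n}\sN n$ directly, enumerating $\ol F_n$. Recall that $F$ is a tower with $F_0 = \NN$, $F_{n+1}\sub^* F_n$, and $F_n\setminus F_{n+1}$ infinite; to make it an ultrafilter base we must meet, for every computable set $R = \phi_e$ (coded by an index $e$), the requirement
\[
N_e\colon\quad (\exists n)\ \big[F_n\sub^*\phi_e \ \text{ or }\ F_n\sub^*\ol{\phi_e}\big],
\]
where of course this is only a genuine constraint when $\phi_e$ is total and $\{0,1\}$-valued. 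The natural device is to have level $n$ of the tower ``decide'' the $n$-th potential computable set: given $F_{n}$ already built (as a co-c.e.\ infinite computable set, in fact we will maintain that each $F_n$ is actually computable), we want to thin $F_n$ to an infinite co-c.e.\ subset $F_{n+1}$ that is almost contained in $\phi_n$ or in $\ol{\phi_n}$, while keeping $F_n\setminus F_{n+1}$ infinite. The obstacle is the familiar one behind $r$-maximal sets: if $\phi_n$ is not total, or is total but splits $F_n$ into two pieces neither of which we can ``see'' is infinite, we must still commit to one side and keep $F_{n+1}$ infinite.

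Concretely, I would run a finite-injury priority construction reminiscent of the construction of an $r$-maximal set (cf.\ Soare~\cite{Soare:87}), lifted to the tower setting. Maintain a ``marker'' structure: at each stage $s$ we have a finite approximation to each $F_n$ for $n\le s$ via elements we have decided to keep. Partition $F_n$ (as built so far) into blocks, and for the computable set $\phi_n$ run the following: wait for $\phi_n$ to converge on the next block; if it does and looks like it wants some kept element of the block to be out of $\phi_n$, tentatively route that element out of $F_{n+1}$ (enumerate it into $\ol F_{n+1}$) and the rest into $F_{n+1}$ --- this is the ``follow $\phi_n$'' action; if $\phi_n$ never converges on a block, default to the ``split in half'' action, putting alternate kept elements of the block into $F_{n+1}$ and into $F_n\setminus F_{n+1}$, which keeps both infinite regardless. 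The delicacy is that routing an element of $F_n$ out of $F_{n+1}$ is permanent (we are building $\ol F_{n+1}$ as a c.e.\ set), so we must not be forced later to put it back; this is handled by the $r$-maximal-style argument that once we have committed level $n$ to ``$F_{n+1}\sub^*\phi_n$'', all \emph{later} decisions at levels $m>n$ only further thin $F_{m}$, and thinning preserves $\sub^*\phi_n$; the interaction with higher-priority requirements is finite because each $N_e$ acts on $F_{e+1}$ only and, once $\phi_e$'s behavior on enough blocks has stabilized, never again changes its mind.

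The key steps, in order: (1) set up the marker/block bookkeeping and define, for each $n$, the stagewise approximation to $F_n$ so that $\ol F_n$ is c.e.\ and $F_n$ is (noneffectively) computable; (2) define the action of requirement $N_e$ on blocks of $F_e$, distinguishing the ``follow $\phi_e$'' and ``split'' cases, and show each $N_e$ acts finitely often at each block and changes strategy only finitely often overall; (3) verify the tower axioms: $F_0=\NN$, each $F_n$ infinite (the ``split'' default guarantees this even when $\phi_{n-1}$ misbehaves), $F_{n+1}\sub^* F_n$ (every element ever kept at level $n+1$ was kept at level $n$, up to finitely many repairs), and $F_n\setminus F_{n+1}$ infinite (again from the ``split'' action, which fires infinitely often on blocks where $\phi_n$ fails to split, and from leaving out the appropriate representatives when it does); (4) verify the ultrafilter-base property: for total $\{0,1\}$-valued $\phi_n$, if $\phi_n$ splits $F_n$ on infinitely many blocks we followed it and get $F_{n+1}\sub^*\phi_n$, and if only finitely often then $F_n\sub^*\phi_n$ or $F_n\sub^*\ol{\phi_n}$ already, so some level decides $\phi_n$; and (5) since $F$ has a c.e.\ complement and is an ultrafilter base, we are done (and note Proposition~\ref{prop:CDno} / the earlier remarks are consistent with this, as $F$ will in fact be high). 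The main obstacle I expect is step (2)--(3): ensuring that the permanent commitments made when ``following $\phi_e$'' at one level never conflict with the need, at every level, to keep both $F_n\setminus F_{n+1}$ and $F_{n+1}$ infinite --- i.e., getting the block sizes and the priority ordering right so that the ``split'' fallback always has infinitely much room to work with below every finitely-injured requirement. This is exactly the combinatorial heart of $r$-maximality, so I would model the argument on that construction, iterated coherently through all the levels $n$ simultaneously.
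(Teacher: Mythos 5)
Your overall plan --- build the co-c.e.\ tower directly, with a ``follow $\phi_e$'' action and a ``split in half'' fallback, modeled on the $r$-maximal set construction --- is in fact the paper's plan: the paper adapts the Lempp--Nies--Solomon $r$-maximal construction, with an $M_e$-strategy that removes every other element of the current stream and a $P_e$-strategy that tries to push $F_{e+1}$ into one side of the $e$-th computable set. But there is a genuine gap in your steps (2)--(4). You treat the construction as finite-injury, claim each $N_e$ ``changes its mind only finitely often,'' and base the verification on the dichotomy ``$\phi_n$ splits infinitely many blocks'' versus ``only finitely many.'' Neither survives scrutiny. First, if $\phi_n$ is total and constant on every block but equal to $0$ on infinitely many blocks and to $1$ on infinitely many others, your strategy never ``follows'' and never decides $\phi_n$: neither $F_n\sub^*\phi_n^{-1}(1)$ nor $F_n\sub^*\phi_n^{-1}(0)$ holds, and no level meets $N_n$. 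Second, your ``follow'' action always routes $F_{n+1}$ into $\phi_n^{-1}(1)$; if $F_n\cap\phi_n^{-1}(1)$ is finite while $F_n\cap\phi_n^{-1}(0)$ is infinite, this makes $F_{n+1}$ finite and destroys the tower. Deciding which side of $\phi_n$ meets $F_n$ infinitely often is a $\Pi^0_2$ question, and since removals from the co-c.e.\ set $F_{n+1}$ are irreversible, a wrong guess cannot be repaired by a finite-injury mind change.

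The missing idea is the $\Pi^0_2$-guessing mechanism the paper supplies: a tree of strategies $\{0,1,2\}^{<\omega}$ in which a node $\alpha$ of length $e$ has outcome $0$ (``infinitely many of my reserved elements land in $V_{e,0}$''), outcome $1$ (likewise for $V_{e,1}$), and outcome $2$ (``some reserved element is never covered by $V_{e,0}\cup V_{e,1}$''), together with the rule that each visit to outcome $0$ removes from $F_{e+1}$ everything accumulated so far under outcome $1$, and initialization empties the streams of nodes to the right. Then the leftmost outcome visited infinitely often wins: if it is $0$, then $F_{e+1}=^*S_{\alpha\ape 0}\sub V_{e,0}$ no matter how often the construction strayed right, and the stream along the true path remains infinite because only finitely much of it is ever dumped. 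Your linear, level-by-level bookkeeping has no analogue of ``dump the right-hand stream on each return to the left,'' which is precisely what reconciles the irreversibility of removals with the infinitely many apparent mind changes that a $\Pi^0_2$ outcome can force. With that tree machinery added (and the requirement restated for pairs of disjoint c.e.\ sets $(V_{e,0},V_{e,1})$ under the hypothesis $V_{e,0}\cup V_{e,1}=\NN$, so that ``waiting for $\phi_e$ to converge'' becomes the outcome-$2$ case), your sketch becomes the paper's proof.
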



%

\begin{proof}
We adapt the construction from the proof of the main result in
\cite{Lempp.Nies.etal:01}, which states that there is an $r$-maximal set~$A$
such that the index set $\mathrm{Cof}_A= \{e \colon W_e \cup A =^* \NN\}$ is
$\SI 3$-complete. Both the original and the adapted version make use of the
fact that we are given a c.e.\ index for a computable set and also one for
its complement (see the pairs $(V_{e,0},V_{e,1})$ below). Our proof can also
be viewed as a variation on the proof of Theorem~\ref{thm: High} in the
setting of co-c.e.\ sets. We remark that by standard methods, one can extend
the present construction to include permitting below a given high c.e.\ set.

We build a co-c.e.\ tower~$F$ by providing uniformly co-c.e.\ sets~$F_e$ for
$e \in \NN$ that form a descending sequence with $F_e \supseteq F_{e+1}$. We
achieve the latter condition by agreeing that whenever we remove~$x$
from~$F_e$ at a stage~$s$, we also remove it from all~$F_i$ for $i>e$.
Furthermore, no element is ever removed from~$F_0$, so $F_0= \NN$.

Let $\seq{(V_{e,0}, V_{e,1})}_{\sN e}$ be an effective listing of all pairs
of disjoint c.e.\ sets as defined in the proof of Theorem~\ref{thm: High}.
The construction will ensure that the following requirements are met:
\begin{align*}
M_e &\colon F_e \setminus F_{e+1}\text{ is infinite,}\\
P_e &\colon V_{e,0} \cup V_{e,1} = \NN \RA F_{e+1} \sub^* V_{e,0} \vee F_{e+1}
   \sub^* V_{e,1}.
\end{align*}
This suffices to establish that~$F$ is an ultrafilter base.

The tree of strategies is $T=\{0,1,2\}^{< \omega}$. Each string $\aaa \in T$
of length~$e$ is tied to~$M_e$ and also to ~$P_e$. We write $\aaa \colon M_e$
and $\aaa \colon P_e$ to indicate that we view~$\aaa$ as a strategy of the
respective type.
\medskip

\n \emph{Streaming.} For each string $\alpha\in T$ with $|\aaa| = e$, at each
stage of the construction, we have a computable  set~$S_\alpha$, thought of
as a stream of numbers used by~$\aaa$.   The purpose of the sets $S_\alpha$
is twofold:
\bi
\item[(a)] to be able to provide candidates for $P_e$ by a
procedure of reserving numbers from the stream, and processing them making
use of its  hypothesis,  and
\item[(b)] to show that $F_e$ is computable.
\ei For~(b), in Claim~\ref{cl:3} we will verify that $F_e=^* S_\aaa$
where~$\aaa$ is the string of length~$e$ on the true path. Since the true
path is merely computable in~$\emptyset''$, we cannot directly define the
co-c.e.\ set~$F$ using the~$S_\aaa$. Rather, we need to spread the
construction of the~$F_e$ over the whole $e$-th level of the tree of
strategies.

We provide some more detail on the dynamics of the streams.
Each time~$\alpha$ is initialized,~$S_\alpha$ is removed from $F_{e+1}$,
and~$S_\alpha$ is reset to be empty. Also,~$S_\alpha$ is enlarged only at
stages at which~$\alpha$ appears to be on the true path.

We will verify the
following conditions on  the final versions of the $S_\aaa$:
\begin{enumerate}
\item $S_\emptyset = \NN$;
\item if~$\alpha$ is not the empty node, then~$S_\alpha$ is a subset of
$S_{\alpha^-}$ (where~$\alpha^-$ is the immediate predecessor of~$\alpha$);
\item at every stage, $S_\gamma \cap S_\beta = \ES$ for incomparable
strings~$\gamma$ and~$\beta$;
\item  any number $x$ is in~$F_{e+1}$ at the time  it first enters~$S_\alpha$;
\item if~$\alpha$ is along the true path of the construction, then~$S_\alpha$
is an infinite computable set.
\end{enumerate}
Note that~$S_\alpha$ is d.c.e.\ uniformly in~$\alpha$. The set~$S_\alpha$ is
finite if~$\alpha$ is to the left of the true path of the
construction;~$S_\alpha$ is an infinite computable set if~$\alpha$ is along
the true path; and~$S_\alpha$ is empty if~$\alpha$ is to the right of the
true path.
\medskip

The {intuitive strategy $\aaa \colon P_e$} is as follows. Only strategies
associated with a string of length~$\le e$ can remove numbers from~$F_{e+1}$.
A~strategy $\aaa \colon P_e$ removes elements from~$S_\aaa$, and at the same
time from~$F_{e+1}$. It regards the set of remaining numbers as its own
version of~$F_{e+1}$; if~$\aaa$ is on the true path then this version is the
true~$F_{e+1}$ up to~$=^*$, as mentioned above. The strategy has to make sure
that no strategies~$\beta$ to its right remove numbers from~$F_{e+1}$ that it
wants to keep. On the other hand, it can only process a number~$x$ once it
knows whether~$x$ is in $V_{e,0}$ or~$V_{e,1}$. The solution to this conflict
is that~$\aaa$ \emph{reserves} a number~$x$ from the stream~$S_{\alpha}$,
which,  by an initialization $\aaa$ carries out at this stage, withholds it
from any action of such a~$\beta$. It then waits until all numbers~$\le x$
are in $V_{e,0}\cup V_{e,1}$. If that never happens for some reserved~$x$,
then~$\aaa$ is satisfied finitarily with eventual outcome~$2$. Otherwise, it
will eventually process~$x$: If $x \in V_{e,0}$, it continues its attempt to
build~$F_{e+1}$ inside~$V_{e,0}$; else it continues to build~$F_{e+1}$
inside~$V_{e,1}$. It takes outcome~$0$ or~$1$, respectively, according to
which case applies. Each time the apparent outcome is~$0$, then the current
$S_{\aaa\ape 1}$ (i.e., the content of its  output stream based on the
assumption that the true outcome is~$1$) is removed from~$F_{e+1}$. So if~$0$
is the true outcome, then indeed $F_{e+1}\sub^* V_{e,0}$; and if~$1$ is the
true outcome, then indeed $F_{e+1}\sub^* V_{e,1}$.
%
\medskip

The \emph{intuitive strategy $\aaa \colon M_e$} simply removes every other
element of~$S_\aaa$ from $F_{e+1}$. Then $\aaa:P_e$ actually only works with
the stream of remaining numbers. There is no further interaction between the
two types of strategies. (Note here that making~$F_{e+1}$ smaller is to the
advantage of~$P_e$.)	Recall that if~$\alpha$ is initialized,~$S_\alpha$ is
removed from $F_{e+1}$, and~$S_\alpha$ is reset to be empty.
\medskip

\n\emph{Construction.}

Stage~$0$. Let~$\delta_0$ be the empty string. Let $F_{e}= \NN$ for each~$e$.
Initialize all strategies.

\emph{Stage}~$s >0$. Let $S_{\ES,s}= [0,s)$. Stage~$s$ consists of substages
$e=0, \ldots ,s-1$, during which we inductively define~$\delta_s$, a string
of length~$s$.
\medskip

\emph{Substage~$e$.} We suppose that $\alpha= \delta_{s}\uhr e$ and
$S_{\alpha}$ have been defined.
\medskip

The strategy $\aaa \colon M_e$ acts as follows. If at the current stage
$S_\aaa = \{r_0< \cdots <r_k\}$ and~$r_k$ is new in~$S_\aaa$, it puts~$r_k$
into $\wt S_\aaa$ if and only if~$k$ is even; otherwise,~$r_k$ is removed
from~$F_{e+1}$.
\medskip

The strategy $\aaa \colon P_e$ picks the first applicable case below.

\textit{Case~1:} Each reserved number of~$\aaa$ has been processed: If there
is a number~$x$ from $\wt S_\aaa$ greater than~$\aaa$'s last reserved number
(if any) and greater than the last stage at which~$\aaa$ was initialized,
pick~$x$ least and \emph{reserve} it. Note that $x<s$ since by definition
$S_{\ES,s}= [0,s)$. Initialize all strategies $\gamma \succeq \aaa\ape 2$,
and let $\aaa\ape 2$ be eligible to act next.

If Case 1 does not apply then~$\aaa$ has a unique reserved, but
unprocessed number~$x$.

\textit{Case~2:} $[0,x] \sub V_{e,0} \cup V_{e,1}$ and $x \in V_{e,0}$:
Let~$t$ be the greatest stage $<s$ at which~$\alpha$ was initialized. Add~$x$
to~$S_{\aaa\ape 0}$ and remove from~$F_{e+1}$ all numbers in the interval
$(t,x)$ that are not in $S_{\aaa\ape 0}$. Declare that~$\aaa$ has
\emph{processed}~$x$. Let ${\aaa \ape 0}$ be eligible to act next.

\textit{Case~3:} $[0,x] \sub V_{e,0} \cup V_{e,1}$ and $x \in V_{e,1}$:
Let~$t$ be the greatest stage $<s$ at which~$\alpha$ was initialized or
$\aaa\ape 0$ was eligible to act. Add~$x$ to $S_{\aaa\ape 1}$ and remove from
$F_{e+1}$ all numbers in the interval $(t,x)$ that are not in $S_{\aaa\ape
1}$. Declare that~$\aaa$ has \emph{processed}~$x$. Let ${\aaa \ape 1}$ be
eligible to act next.

\textit{Case~4:} Otherwise, that is, $[0,x] \not \sub V_{e,0} \cup V_{e,1}$:
Let~$t$ be the greatest stage $<s$ at which~$\alpha$ was initialized, or
$\aaa \ape 0$ or $\aaa \ape 1$ was eligible to act. Let $S_{\aaa\ape 2}= \wt
S_\aaa \cap (t, s)$.
Let $\aaa\ape 2$ be eligible to act.

We define $\delta_{s}(e)=i$ where $\aaa \ape i$, $0 \le i \le 2$, has been
declared eligible to act next. If $e+1< s$, then carry out the next substage.
Else initialize all the strategies~$\beta $ such that $\delta_s <_L \beta$
and end stage~$s$.
\medskip

\n \emph{Verification.} By construction and our convention above,~$F_e$ is
co-c.e., and $F_e \supseteq F_{e+1}$ for each~$e$.

Let $g \in 3^\omega$
denote the true path, namely, the leftmost path in $\{0,1,2\}^\omega$ such
that $\forall e\; \ex^\infty s \, [ g\uhr e \preceq \delta_s]$. In the
following, given~$e$, let $\aaa = g \uhr e$. We verify a number of claims.

\begin{claim}
$\aaa$ is only initialized finitely often.
\end{claim}

\n To see this, let $s_0>0$ be a stage such that $\aaa \le_L \delta_s$ for
each $s \ge s_0$. Suppose the strategy~$\aaa$ is initialized at stage $s \ge
s_0$. Then $\aaa \succeq \beta \ape 2$  for a strategy $\beta\colon P_i$,
where $i= |\beta|$, and  this initialization occurs at Case~1 of substage~$i$
of stage~$s$, namely, when the strategy~$\beta$ reserves a new number~$y$.
However,~$\aaa$ can only be initialized once in that way for each
such~$\beta$: If $\beta$ processes~$y$ at a later stage~$t$, then this causes
$\delta_t <_L \aaa$, contrary to the choice of~$s_0$. This shows the claim.

Let $s_\aaa$ be the largest stage~$s$ such that~$\aaa$ is initialized at
stage~$s$. Note that $\aaa \preceq \gamma$ implies $s_\aaa \le s_\gamma$.

\begin{claim}
The conditions (1)--(5) related to streaming hold.
\end{claim}

(1),~(2) and (4)  hold by construction. (3) Assume this fails for
incomparable~$\gamma$ and~$\beta$, so $x \in S_\gamma \cap S_\beta$ at
stage~$s$. By~(2), we may as well assume that $\gamma= \aaa \ape i$ and
$\beta = \aaa \ape k$ where $i<k$. By construction, $k\le 1$ is not possible,
so $k=2$. Since $x \in S_{\aaa\ape i}$ and $i \le 1$,~$x$ was reserved
by~$\aaa$ at some stage $t\le s$. So~$x$ can never enter $S_{\aaa \ape 2}$ by
the initialization of $\aaa\ape 2$ when~$x$ was reserved by the strategy
$\aaa \colon P_e$ in its Case~1.

(5) holds inductively, by the definition of the true path and
because~$S_\aaa$ is enumerated in increasing fashion at stages $\ge s_\aaa$.

\begin{claim}\label{cl:3}
$F_e =^* S_\aaa$ (and hence, $F_e$ is computable).
\end{claim}

The claim is verified by induction on~$e$. We show that for all $x>
s_{\alpha}$, we have $x\in F_e$ if and only if $x\in S_{\aaa}$. This holds
for $e=0$ because $F_0= S_\ES= \NN$. For the inductive step, let $\gamma = g
\uhr(e+1)$.

First, we verify that $F_{e+1}\cap (s_{\gamma}, \infty)\sub S_\gamma$.
Suppose that $x>s_{\gamma}$ and $x \in F_{e+1}$. Then $x \in F_e$ and
$x>s_{\alpha}$, so by the inductive hypothesis $x \in S_\aaa$. By
construction, any element~$x$ that  does not enter~$S_\gamma$ is also
removed from~$F_{e+1}$ unless~$x$ is the last element~$\aaa$ reserves.
However, in that case necessarily $\gamma = \aaa
\ape 2$ and  $\gamma$  is initialized when $x$ is reserved, so
$x< s_{\gamma}$ contrary to our assumption.

Next, we verify that $ S_\gamma\cap (s_{\gamma}, \infty) \sub F_{e+1} $.
Suppose that $x \in S_\gamma$ and $x>s_{\gamma}$. Then $x \in S_\aaa$, so by
the inductive hypothesis $x \in F_e$. At a stage $s \ge s_\gamma$, an
element~$x$ of~$S_\aaa$ cannot be removed from $F_{e+1}$ by a strategy $\beta
>_L \aaa$ because $S_\beta\cap S_\aaa= \ES$ by~(3) as verified above and
since~$\beta$ can only remove elements from~$S_\beta$. So~$x$ can only be
removed from $F_{e+1}$ by~$\aaa \colon M_e$ or~$\aaa \colon P_e$.

If $\aaa \colon M_e$ removes~$x$ from~$F_{e+1}$, then $x \not \in \wt
S_\aaa$, which contradicts that $x \in S_\gamma$. So, by construction, the
only way~$x$ can be removed from $F_{e+1}$ is by the strategy $\aaa \colon
P_e$. Since~$x>s_{\gamma}$ this would mean that~$x$ does not enter~$S_\gamma$
either, contrary to our assumption.

\begin{claim}
Each requirement~$ M_e$ is met, namely, $F_e \setminus F_{e+1}$ is infinite.
\end{claim}

To see this, recall that $\aaa = g \uhr e$. The
action of $\aaa:M_e$ removes infinitely many elements of $S_\aaa $
from~$F_{e+1}$.
 This suffices  by Claim~\ref{cl:3}.
\begin{claim}
Each requirement~$ P_e$ is met.
\end{claim}

Suppose the hypothesis of~$P_e$ holds. Then every number that~$\aaa$ reserves
is eventually processed. So either $g(e)= 0$, in which case $F_{e+1} \sub^*
V_{e,0}$ by Claim~\ref{cl:3}, or $g(e)= 1$, in which case $F_{e+1} \sub^*
V_{e,1}$, also by Claim~\ref{cl:3}.
\end{proof}

\section{Ultrafilter bases for other Boolean algebras}
\label{sec:other_BAs}


As mentioned, we have set up our framework to apply to general countable
Boolean algebras, rather than merely the Boolean algebra of the computable
sets, mainly with subsequent research in mind. In this last section of our
paper, we provide two results in the setting of other Boolean algebras of
sets.


Recall that~$K(x)$ denotes the prefix-free complexity of a string~$x$, and
that a set $A\sub \NN$ is \emph{$K$-trivial} if $\ex c\, \forall n \, K(A\uhr
n) \le K(0^n)+c$. For more background on $K$-trivial sets, see Nies
\cite[Ch.\ 5]{Nies:book} or Downey and Hirschfeldt~\cite[Ch.\
11]{Downey.Hirschfeldt:book}. Note that by combining results of various
authors, the $K$-trivial degrees form a Turing ideal in the $\DII$-degrees
(see, e.g., Nies \cite[Sections 5.2, 5.4]{Nies:book}). Thus the $K$-trivial
sets form a Boolean algebra.

\begin{thm}
There is a $\DII$ ultrafilter base for the Boolean algebra of the $K$-trivial
sets.
\end{thm}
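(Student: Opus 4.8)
The plan is to build a descending sequence of $K$-trivial sets $\seq{F_n}$ that forms a tower, is an ultrafilter base relative to the Boolean algebra of $K$-trivial sets, and can be computed from some $\DII$ oracle. The key point is that, by Proposition~\ref{prop:CDno}, since the $K$-trivial degrees form a Turing ideal, the function $p$ associated with such an ultrafilter base cannot be dominated by any $K$-trivial function; in particular the whole object $F$ cannot be $K$-trivial, so we genuinely need an external oracle, but we want it to be $\DII$. Since the Boolean algebra here is countable, we may fix an effective (in some $\DII$ oracle) enumeration $\seq{R_n}$ of the $K$-trivial sets, say via a $\DII$ list of indices together with approximations, and then run essentially the co-c.e.\ / tree-of-strategies construction from Theorem~\ref{th:coce UFB}, but relativized: instead of the pairs $(V_{e,0},V_{e,1})$ of disjoint c.e.\ sets, we now feed in the pairs $(R_e, \ol{R_e})$ for $K$-trivial $R_e$, together with approximations to their characteristic functions.

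First I would set up the bookkeeping: the $K$-trivial sets have a $\DII$-indexing (each is $\DII$ and the class of indices together with a witnessing constant can be handled by $\emptyset''$, but more cheaply one can use that every $K$-trivial set is truth-table below $\emptyset'$ with a cost-function approximation), so there is an oracle $X \leq_T \emptyset''$ (and one should check $X$ can be taken $\DII$, using that the $K$-trivial sets are uniformly $\omega$-c.e.\ via the cost function method) that enumerates a sequence $\seq{(R_e, \ol{R_e})}$ exhausting, up to $=^*$, all $K$-trivial sets and their complements. Then I would re-run the construction of Theorem~\ref{th:coce UFB} with $V_{e,0}=R_e$, $V_{e,1}=\ol{R_e}$, building the tower $\seq{F_n}$. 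The crucial change is that the resulting sets $F_e$ are no longer computable but only $K$-trivial: along the true path, $F_e =^* S_\aaa$ where $S_\aaa$ is obtained from $\omega$ by intersecting with finitely many $R_i$'s (and thinning), hence is a Boolean combination of $K$-triviality-preserving operations on $K$-trivial sets, so each $F_e$ is $K$-trivial. One must verify that the streaming properties (1)--(5), suitably reread with ``$K$-trivial'' in place of ``computable'', still go through, and that the requirements $M_e$ (infinite differences) and $P_e$ (for each $K$-trivial $R$, some $F_n \sub^* R$ or $F_n \sub^* \ol R$) are met exactly as before.

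The main obstacle I expect is twofold. The first difficulty is confirming that the oracle running the construction can be taken to be $\DII$ rather than merely $\leq_T \emptyset''$: the construction of Theorem~\ref{th:coce UFB} is a $\emptyset''$-style argument (true paths, $r$-maximal-set technology), so naively $F$ is only computable from $\emptyset''$; one needs to argue, perhaps via a movable-marker / $\DII$-approximation reorganization, that the true path and hence $F$ are actually $\DII$, exploiting that the inputs $R_e$ are themselves $\omega$-c.e.\ with well-behaved cost-function approximations. The second difficulty is closure: I must be sure that each $F_e$ constructed lies in the Boolean algebra, i.e.\ is $K$-trivial. This uses that the $K$-trivials form a Turing ideal closed under Boolean operations, so a finite Boolean combination of the $R_i$ intersected with a computable thinning set is $K$-trivial; the delicate part is that along the true path the ``finite'' is genuinely finite for each fixed $e$, which follows from the same true-path analysis as in Theorem~\ref{th:coce UFB}. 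Once these two points are settled, the verification that $\seq{F_n}$ is a tower that is an ultrafilter base for the $K$-trivials is a direct transcription of the verification of Theorem~\ref{th:coce UFB}, reading $K$-trivial for computable and $R_e$ for $V_{e,0}$.
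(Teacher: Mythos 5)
Your proposal diverges from the paper's proof in its choice of template: you adapt the infinite-injury tree construction of Theorem~\ref{th:coce UFB}, whereas the paper adapts the much simpler dominating-function construction of Theorem~\ref{thm: High}. The paper's key ingredient, which your proposal is missing, is the Ku\v cera--Slaman result that there is a single $\DII$ function $h$ dominating every function that is partial computable in some $K$-trivial set. With $h$ in hand, one takes the wtt-to-$\Halt$ listing $\seq{(V_{e,0},V_{e,1})}\sN e$ of the $K$-trivials and their complements (Downey--Hirschfeldt--Nies--Stephan), builds the splitting tree $S_{\aaa\ape k}=\wt S_\aaa\cap V_{e,k}$ on the \emph{binary} tree (no third outcome is needed, since the listed pairs are genuine complements), and uses $h$ to track the leftmost path with infinite $S_\aaa$, exactly as in Theorem~\ref{thm: High}. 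The entire construction is then recursive in $\Halt$, so $F$ is $\DII$ for free.

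This is precisely the point where your argument has a genuine gap. You correctly identify that the $\ES''$-style true-path analysis threatens to push the complexity of $F$ above $\DII$, but you leave the repair as a hope (``perhaps via a movable-marker / $\DII$-approximation reorganization''). The difficulty is worse than you suggest: Theorem~\ref{th:coce UFB} produces a co-c.e.\ (hence $\DII$) object only because the inputs $V_{e,i}$ are c.e.\ and the construction is monotone --- numbers are only ever \emph{removed} from the $F_e$. Once the inputs are merely $\DII$ sets $R_e$, their approximations change their minds, so the sorting of a reserved $x$ into $S_{\aaa\ape 0}$ versus $S_{\aaa\ape 1}$ must be revisable, and the monotonicity that made $F$ co-c.e.\ is destroyed. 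Running the construction relative to $\Halt$ instead yields an object that is co-c.e.\ in $\Halt$, i.e.\ only $\Pi^0_2$, not $\DII$. Without the Ku\v cera--Slaman dominating function (or some comparable device exploiting lowness properties of the $K$-trivials), your route does not deliver the stated complexity bound. Your closure observations (each $F_e$ is a finite Boolean combination of $K$-trivials intersected with a thinning set, hence $K$-trivial since the $K$-trivials form a Turing ideal) are fine and match the paper.
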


\begin{proof}
Ku\v cera and Slaman~\cite{Kucera.Slaman:09} noted that there is a function
$h \leT \Halt$ that dominates all functions that are partial computable in
some $K$-trivial set. We use~$h$ in a variation of the proof of
Theorem~\ref{thm: High}.

Let $\seq{V_{e,0}, V_{e,1}}\sN e$ be a uniform listing of the $K$-trivials
and their complements given by wtt-reductions to~$\Halt$; such a listing
exists by Downey, Hirschfeldt, Nies, and
Stephan~\cite{Downey.Hirschfeldt.ea:03} (see also \cite[Theorem
5.3.28]{Nies:book}).

Let $T=\{0,1\}^{< \omega}$. For each $\alpha \in T$, we define a (possibly
finite) $K$-trivial set~$S_\aaa$. Let $S_{\ES}= \NN$. Suppose we have defined
the set $S_\aaa = \{r_0< r_1 < \cdots \}$. Let $\wt S_\aaa$ contain the
numbers of the form $r_{2i}$. Let $S_{\aaa\ape k} = \wt S_\aaa \cap V_{e,k}$
for $e = |\alpha|$ and $k= 0,1$. Since $\wt S_\aaa \leT S_\aaa$, one verifies
inductively that all these sets are $K$-trivial.

Uniformly recursively in~$\Halt$, we build sets~$F_e$, given as the set of
members of nondecreasing unbounded sequences $a^e_0 \le a^e_1\le \ldots$.
Suppose we have defined~$a^e_{k-1}$. Try to let $\aaa \in T$ be the leftmost
string of length~$e$ such that~$S_\aaa$ has at least $k+1$ elements less than
$h(k)$. If such~$\aaa$ exists, let~$a^e_k$ be the~$k$-th element of~$S_\aaa$,
unless this is less than $a^e_{k-1}$, in which case we let $a^e_k=
a^e_{k-1}$.

%

Let $g \in \tp \omega$ denote the leftmost path in $\{0,1\}^\omega$ such that
for every~$e$, the set $S_{g \uhr e}$ is infinite. Fix~$e$ and let $\aaa =
g\uhr e$. Let $p(k)$ be the $(k+1)$-st element of~$S_{\aaa}$. Since~$h$
dominates the function~$p$, eventually in the definition of~$F_e$ we will
always pick~$\aaa$. Hence $F_e=^* S_\aaa$. In particular,~$F_e$ is
$K$-trivial. To see that $F\leT \Halt$, given input $n =\la r, e\ra$,
with~$\Halt$ as an oracle, compute the least~$k$ such that $r\le a^e_k$,
using that the sequences $\seq {a^e_k}\sN k$ are unbounded for each~$e$. Then
$n \in F$ iff $r= a^e_k$.

Clearly, if~$S_{\aaa}$ is infinite, then $S_\aaa \setminus S_\beta$ is
infinite for $\aaa \prec \beta$. So $ F_e \setminus F_{e+1}$ is infinite.

To verify that~$F$ is an ultrafilter base for the $K$-trivials, let~$R$ be a
$K$-trivial set. Pick~$e$ such that $R= V_{e, 0}$ and $\ol R= V_{e,1}$. If
$g(e)=0$ then $S_{g\, \uhr {e+1}} \sub V_{e,0}$, and hence $F_{e+1} \sub^*
R$. Otherwise, $S_{g\, \uhr {e+1}} \sub V_{e,1}$, and hence $F_{e+1} \sub^*
\ol R$.
\end{proof}

\begin{remark}
Any ultrafilter base for the $K$-trivials must have high degree. We can see
this by modifying the proof of Theorem~\ref{thm:UFB->high}: Every
Martin-L\"of random set~$X$ is Martin-L\"of random relative to every
$K$-trivial (i.e., $K$-trivial sets are \emph{low for ML-randomness}). Hence
neither~$X$ nor~$\ol X$ contains an infinite $K$-trivial subset.
\end{remark}

\smallskip

Finally, we consider the Boolean algebra of the primitive recursive sets. One
says that an oracle~$L$ is of PA degree if it computes a completion of Peano
arithmetic. Recall that~$L$ is of PA degree if and only if it computes a
separating set for each disjoint pair of c.e.\ sets.

\begin{thm}
An oracle~$C$ computes an ultrafilter base for the primitive recursive sets
if and only if~$C'$ is of PA degree relative to~$\Halt$.
\end{thm}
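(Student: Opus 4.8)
The plan is to prove both implications by adapting the two halves of the analysis of $\+U$ for the computable sets, but now working at the level of~$\Halt$ rather than~$\ES$, and using the fact that for primitive recursive sets one has \emph{uniform} access to characteristic indices. First I would recall the basic connection: if~$\B_{\mathrm{pr}}$ is the Boolean algebra of primitive recursive sets, then a pair of an index for a primitive recursive set~$R$ and an index for~$\ol R$ is something that can be recognized, and more importantly, the totality/splitting questions that were $\Sigma^0_2$ in the computable case become questions that a $\Halt$-oracle can interrogate directly. The key reformulation I would aim for is that ``$C$ computes a $\B_{\mathrm{pr}}$-ultrafilter base'' should be equivalent to ``$C$ computes a function that, relative to~$\Halt$, dominates all functions partial computable in~$\Halt$,'' i.e.\ a $\Halt$-dominating function in the relativized sense. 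By Martin's characterization relativized to~$\Halt$, the latter is equivalent to~$C'$ computing $\mathrm{Tot}^{\Halt} = \{e : \phi_e^{\Halt}\text{ total}\}$, which in turn, by the relativized form of the Jockusch-type characterization (a family containing exactly the $\Halt$-computable sets is high over~$\Halt$), I want to connect to $C'$ being of PA degree over~$\Halt$. Actually the cleaner route for the PA-degree equivalence is: PA degree over~$\Halt$ is exactly the power needed to select, uniformly, through the $\Pi^0_1[\Halt]$-classes of Lemma~\ref{lem:sequence PI01} relativized to~$\Halt$, and separating sets for disjoint $\Halt$-c.e.\ (i.e.\ $\Sigma^0_2$) pairs is what lets one decide which of~$V_{e,0}$, $V_{e,1}$ an ultrafilter base tracks.

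For the forward direction ($C$ computes a UFB $\Rightarrow$ $C'$ is PA over $\Halt$), I would mimic the proof of Theorem~\ref{thm:UFB->high}. Relativize Lemma~\ref{lem:sequence PI01} to~$\Halt$: build a uniformly $\Halt$-computable (equivalently, uniformly $\Pi^0_1[\Halt]$, hence $\Pi^0_2$) sequence of nonempty classes $P_e$ so that $P_e$ is a singleton exactly when $\phi_e^{\Halt}$ is total, and otherwise contains only elements bi-immune relative to~$\Halt$ (using that $\Halt$-ML-randoms are $\Halt$-bi-immune, and that primitive recursive sets are in particular $\Halt$-computable). Given a $\B_{\mathrm{pr}}$-ultrafilter base $F \le_T C$, run the same biconditional: $\phi_e^{\Halt}$ is total iff some $F_i\setminus[0,n]$ is contained in a member of $P_e$ or its complement. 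Since the $P_e$ are $\Pi^0_1[\Halt]$ uniformly and $S_{F_i\setminus[0,n]}$ is $\Pi^0_1[F]$ uniformly, the right-hand side is $\Sigma^0_2[F\oplus\Halt]$, hence $\mathrm{Tot}^{\Halt}$ is $\Delta^0_2$ relative to $F\oplus\Halt$ via fixed indices, so it is computable from $(F\oplus\Halt)' \le_T C'$. Being high over~$\Halt$ (i.e.\ $C'$ computes $\mathrm{Tot}^{\Halt}$, equivalently a $\Halt$-dominating function) implies $C'$ is of PA degree relative to~$\Halt$ by the relativized form of the implication ``high $\Rightarrow$ computes a PA degree''? — wait, that implication goes the wrong way in general. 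So here I must instead argue more carefully: what the UFB gives is not just highness over~$\Halt$ but the actual separating power. The honest argument: given $F$, for a disjoint pair of $\Halt$-c.e.\ sets presented as $(V_{e,0},V_{e,1})$, the UFB picks $n$ with $F_n\subseteq^* \ol V_{e,0}$ or $F_n\subseteq^* \ol V_{e,1}$; using $F$ together with $\Halt$ one can, in the limit, read off which disjunct holds, and $F_n$ (a computable set, hence with a $C'$-computable characteristic index by Proposition~\ref{pr:halt} relativized) then serves to define a separating set computable from $C'\oplus\Halt = C'$. Hence $C'$ is of PA degree over~$\Halt$.

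For the converse, suppose $C'$ is of PA degree relative to~$\Halt$. Then $C'$ computes a $\Halt$-dominating function~$h$ (every PA degree over an oracle $X$ computes an $X$-dominating... no — PA degrees need not be high). So again I cannot just invoke domination. Instead I would run the \emph{tree} construction from the proof of Theorem~\ref{thm: High} (and of Theorem~\ref{th:coce UFB}) on the tree $\{0,1,2\}^{<\omega}$, where at node~$\alpha$ of length~$e$ one splits the current stream by $V_{e,0}$ versus $V_{e,1}$ — but now $V_{e,0},V_{e,1}$ are an effective listing of all pairs of \emph{disjoint c.e.\ sets} whose union, when it is all of~$\NN$, names a primitive recursive set and its complement. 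The branching at~$\alpha$ is governed by: does $x\in V_{e,0}$ or $x\in V_{e,1}$ (a $\Halt$-question), and does $S_\alpha$ split (again a question about c.e.\ sets, so $\Halt$-level). The PA-over-$\Halt$ oracle $C'$ is exactly what is needed to choose, uniformly and correctly, the leftmost path $g$ through this tree along which all $S_{g\restriction e}$ are infinite — this is the standard "PA degrees compute paths through nonempty $\Pi^0_1$ classes" applied one level up (the relevant tree of candidate paths is $\Pi^0_1[\Halt]$). Having $g$ computably from $C'$, one defines $F_e =^* S_{g\restriction e}$ exactly as before and verifies the UFB conditions ($F_0=\NN$, $F_{e+1}\subset^*_\infty F_e$, and for each primitive recursive $R=V_{e,0}$ with $\ol R = V_{e,1}$, the branch value $g(e)\in\{0,1\}$ forces $F_{e+1}\subseteq^* R$ or $F_{e+1}\subseteq^*\ol R$). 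Finally, $F$ is computable from $C'$, hence from $C'$, but we need $F \le_T C$; here I use that the path $g$ can be obtained from $C'$ and then $F$ is built from $g$ by a $\Halt$-computable (indeed c.e.-level) procedure, and the statement of the theorem is about what~$C$ computes via the name — so one checks that the whole package $F$ is computable from $C$ using that $C' \ge_T$ the needed PA-over-$\Halt$ object and that the construction only needs that object once. The main obstacle I anticipate is precisely this last bookkeeping: making sure the reduction lands at $C$ and not merely at $C'$ or $C''$ — i.e., verifying that the $\Halt$-level and $C'$-level oracle queries made during the construction can be packaged so that the final object $F$ has a single name Turing-below~$C$, which is really the content of the phrase "$C'$ is of PA degree relative to $\Halt$" being the right hypothesis rather than "$C$ is high" or "$C$ is of PA degree". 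I would settle this by showing both directions reduce to the clean statement $C'$ computes a separating set for every disjoint pair of $\Sigma^0_2$ sets (= PA over $\Halt$), treating the UFB name as a $\Delta^0_2(C)$ object throughout.
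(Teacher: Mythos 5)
Your closing instinct---that both directions should reduce to ``$C'$ computes a separating set for every disjoint pair of $\Sigma^0_2$ sets''---is the right organizing idea for the \emph{backward} direction, and the paper does roughly what you sketch, but organized differently: it fixes a uniformly recursive listing $\seq{A_i}$ of the primitive recursive sets and, at each step $n$, has the PA-over-$\Halt$ oracle separate the disjoint $\Sigma^0_2$ conditions $|A_i\cap A_n|<|A_i\cap \ol A_n|$ and $|A_i\cap \ol A_n|<|A_i\cap A_n|$, thereby choosing which half of the current set to descend into. No ``leftmost path'' is involved; note that computing the \emph{leftmost} path of a nonempty $\Pi^0_1[\Halt]$ class is in general strictly harder than PA over $\Halt$, and the class of acceptable paths in your tree construction (``take outcome $2$ only when both intersections are finite'') is not $\Pi^0_1[\Halt]$ anyway, so your appeal to the basis theorem one level up does not go through as stated. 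More importantly, the bookkeeping issue you yourself flag as the main obstacle---getting $F\le_T C$ rather than $F\le_T C'$---is left unresolved, and it is the real content of this direction. The paper's fix: the construction outputs a $C'$-computable sequence of primitive recursive \emph{indices} $e_n$ with $A_{e_{n+1}}\subset_\infty A_{e_n}$; by the limit lemma this sequence has a uniformly $C$-computable approximation $\seq{e_{n,x}}$, and one sets $F_n(x)=A_{e_{n,x}}(x)$. Since $e_{n,x}\to e_n$, each $F_n=^*A_{e_n}$ is still primitive recursive, and the name $F$ is $\le_T C$. Your stream-based construction does not obviously admit this trick, because it never produces indices to evaluate after the approximation settles.

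The forward direction of your proposal has a genuine gap. After correctly discarding the highness route, you assert that for a disjoint pair of $\Sigma^0_2$ sets ``the UFB picks $n$ with $F_n\subseteq^*\ol V_{e,0}$ or $F_n\subseteq^*\ol V_{e,1}$.'' The ultrafilter-base property only applies to sets in the Boolean algebra, i.e., to primitive recursive sets; an arbitrary disjoint pair of $\Sigma^0_2$ (or even c.e.) sets is not such a pair, so the UFB property says nothing about it directly. Converting ``$F$ decides every primitive recursive set'' into ``$(F\oplus\Halt)'$ separates every disjoint $\Sigma^0_2$ pair'' is precisely the nontrivial coding argument in Jockusch and Stephan's theorem that a $p$-cohesive set has jump of PA degree over $\Halt$. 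The paper gets this direction almost for free: the range of a function associated with $F$ (Fact~\ref{fa:assoc fcn}) is $p$-cohesive, and then one implication of \cite{Jockusch.Stephan:93} applies. Your proposal neither cites that result nor reproduces its coding, so the forward direction as written does not close.
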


\begin{proof}
We modify the proof of Jockusch and
Stephan~\cite[Theorem~2.1]{Jockusch.Stephan:93}. They say that a set $S \sub
\NN$ is \emph{$p$-cohesive} if~$S$ is cohesive for the primitive recursive
sets. Their theorem states that~$S$ is $p$-cohesive if and only if~$S'$ is of
PA degree relative to~$\Halt$.

\rapf Suppose that~$C$ computes an ultrafilter base~$F$ for the primitive
recursive sets. Let $g\leT F$ be a function associated with~$F$ as in
Definition~\ref{df:assoc_fcn}. Then the range~$S$ of~$g$ is $p$-cohesive.
Hence~$S'$ and therefore~$C'$ is of PA degree relative to~$\Halt$ by one
implication of~\cite[Theorem~2.1]{Jockusch.Stephan:93}.

\lapf We modify the proof of the other implication
of~\cite[Theorem~2.1]{Jockusch.Stephan:93}. Let $\seq {A_i}\sN i$ be a
uniformly recursive list of all the primitive recursive sets. We call~$i$ a
\emph{primitive recursive index for~$A_i$} (or \emph{index}, for short). By
our hypothesis on~$C$, there is a function $g\leT C'$ such that
\begin{eqnarray*}
|A_i \cap A_n| < |A_i \cap \ol A_n| & \RA & g(i,n) = 0 \\
|A_i \cap \ol A_n| < |A_i \cap A_n| & \RA & g(i,n) = 1
\end{eqnarray*}
(because the conditions on the left are both~$\SI 2$, and so~$C'$ computes a
separating set for them).

We inductively define a $C'$-computable sequence of indices $\seq{e_n} \sN
n$. Let~$e_0$ be an index for~$\NN$. If~$e_n$ has been defined and $A_{e_n} =
\{r_0 < r_1< \cdots\}$ (possibly finite), let~$e'_n$ be an index, uniformly
obtained from~$e_n$, such that $A_{e'_n}= \{r_0, r_2, \ldots\}$. Now let
\medskip

$A_{e_{n+1}}= A_{e'_n} \cap \ol A_n$ if $g(e'_n, n )= 0$, and

$A_{e_{n+1}}= A_{e'_n} \cap A_n$ if $g(e'_n, n )= 1$.

\medskip

By induction on~$n$, one verifies that $A_{e_n}$ is infinite and $A_{e_n}
\setminus A_{e_{n+1}}$ is infinite. Since $g\leT C'$, the numbers~$e_n$ have
a uniformly $C$-computable approximation $\seq{e_{n,x}} \sN x$.

Let the ultrafilter base $F\leT C$ be given by $F_n(x)= A_{e_{n,x}}(x)$. Then
$F_n =^* A_{e_n}$ is primitive recursive. Since $F_{n+1} \sub^* \ol A_n$ or
$F_{n+1} \sub^* A_n$ for each~$n$, the set~$F$ is an ultrafilter base for the
primitive recursive sets.
\end{proof}

\def\cprime{$'$}

\end{document}